\newtheorem{theorem}{Theorem}[section]
\newtheorem{lemma}[theorem]{Lemma}
\newtheorem{proposition}[theorem]{Proposition}
\newtheorem{problem}[theorem]{Problem}
\theoremstyle{definition}
\newtheorem{example}[theorem]{Example}
\newtheorem{thmy}{Theorem}
\newenvironment{oldtheorem}{\stepcounter{thm}\begin{thmy}}{\end{thmy}}
\newtheorem*{note*}{Note}
\newcommand{\R}{\mathbb R}
\newcommand{\Rn}{{\mathbb R}^n}
\newcommand{\Sn}{\mathbb{ S}^{n-1}}
\newcommand{\Snm}{{\mathbb S}^{n-2}}
\newcommand{\Ha}{{{\cal H}^{n-1}}}
\newcommand{\Ham}{{{\cal H}^{n-2}}}
\newcommand{\Bo}{{\cal B}}
\begin{document}


\title{\bf Functions with isotropic sections}
\date{}
\medskip

\author {Ioannis Purnaras, Christos Saroglou}

\maketitle
\begin{abstract}
We prove a local version of a recently established theorem by Myroshnychenko, Ryabogin and the second named author. More specifically, we show that if $n\geq 3$, $g:\mathbb{S}^{n-1}\to\mathbb{R}$ is an even bounded measurable function, $U$ is an open subset of $\mathbb{S}^{n-1}$ and the restriction (section) of $f$ onto any great sphere perpendicular to $U$ is isotropic, then ${\cal C}(g)|_U=c+\langle a,\cdot\rangle$ and ${\cal R}(g)|_U=c'$, for some fixed constants $c,c'\in\mathbb{R}$ and for some fixed vector $a\in \mathbb{R}^n$. Here, ${\cal C}(g)$ denotes the cosine transform and ${\cal R}(g)$ denotes the Funk transform of $g$. However, we show that $g$ does not need to be equal to a constant almost everywhere in $U^\perp:=\bigcup_{u\in U}(\mathbb{S}^{n-1}\cap u^\perp)$.  For the needs of our proofs, we obtain a new generalization of a result from classical differential geometry, in the setting of convex hypersurfaces, that we believe is of independent interest. 
\end{abstract}
\section{Introduction}

\hspace*{1.5em}Let us fix an orthonormal basis $\{e_1,\dots,e_n\}$ in $\mathbb{R}^n$. We write $\langle x, y\rangle$ for the standard inner product of $x$ and $y$ in $\R^n$. For $k=1,\dots,n-1$, the set of all $k$-dimensional subspaces of $\Rn$ is denoted by $G_{n,k}$. If $A\subseteq \Rn$, the orthogonal projection of $A$ onto a subspace $H\in G_{n,k}$, will be denoted by $A|H$. If $u\in\Rn$, we denote by $u^\perp$ the subspace of codimension 1 which is orthogonal to $u$. The notation $B_2^n$ stands for the standard unit ball in $\mathbb{R}^n$. Also, $\Sn=\{x\in \Rn:|x|=1\}$ denotes the
unit sphere in $\R^n$. The boundary of a set $A$ will be denoted by $\textnormal{bd}A$. A {\it spherical cap} $U\subseteq \Sn$ is any set of the form $\{x\in \Sn:\langle x,u\rangle>a\}$, $0<a<1$, $u\in \Sn$. The point $u$ is called the {\it center} of the spherical cap $U$. 
Denote, also, by ${\cal H}^a$, the $a$-dimensional Hausdorff measure in $\Rn$, where $0<a\leq n$. We will say that a Borel measure on the sphere $\Sn$ is absolutely continuous if it is absolutely continuous with respect to $\Ha$. For a Borel set $\omega$ in $\Sn$, $\Bo(\omega)$ stands for the $\sigma$-algebra of Borel subsets of $\omega$.  Any convergence of sets will be with respect to the Hausdorff metric. The orthogonal group in $\Rn$ is denoted by $O(n)$. For $u\in\Sn$, we set $O(n,u):=\{T\in O_n:Tu=u\}$. 

Let $\mu$ be a signed Borel measure on $\Sn$ and $\zeta:\Sn\to\R$ be an integrable function. The {\it cosine transform} ${\cal C}(\mu)$ of $\mu$ and the {\it Funk transform} (=Radon transform on the sphere) ${\cal R}(\zeta)$ of $\zeta$ are defined as follows.
$${\cal R}(\zeta)(u)=\int_{\Sn\cap u^\perp}\zeta (x)d\Ham(x),\qquad u\in \Sn,$$ 
$${\cal C}(\mu)(u)=\int_{\Sn}|\langle x,u\rangle| d\mu(x),\qquad u\in \Sn.$$
If $f$ is an integrable function on $\Sn$, we define ${\cal C}(f):={\cal C}(fd\Ha(\cdot))$ and we simply say that ${\cal C}(f)$ is the cosine transform of $f$.
A function $g:\Sn\to\R$ is called {\it isotropic} if the map 
$$\Sn\ni u\mapsto \int_{\Sn}\langle x,u\rangle^2 g(x)d\Ha(x)$$is constant. The following problem was proposed in \cite{M-R-S}.
\begin{problem}\label{prob1}
Assume that for a measurable subset $U$ of $S^{n-1}$ and for an even bounded measurable function $g:S^{n-1}\to\mathbb{R}$, the restriction $g|_{S^{n-1}\cap u^{\perp}}$ onto $S^{n-1}\cap u^{\perp}$ is isotropic, for almost all $u\in U$. Is it true that $g$ is almost everywhere equal to a constant on the set $U^\perp$?
\end{problem}
Here, $U^\perp$ stands for the union of all great subspheres of $\Sn$, which are orthogonal to a direction from $U$, i.e $U^\perp=\bigcup_{u\in U} (S^{n-1}\cap u^{\perp})$. The following was established in \cite{M-R-S}. 
\begin{oldtheorem}\label{thm-old-s}
Problem \ref{prob1} has affirmative answer if $U=\Sn$. 
\end{oldtheorem}
Our goal is to prove that the answer to Problem \ref{prob1} is in general negative but on the other hand, a local version of Theorem \ref{thm-old-s} is still valid. 
\begin{theorem}\label{thm-counterexample}
Let $U$ be an open subset of $\Sn$, that does not contain $U^\perp$. There exists a continuous function $g:\Sn\to \R$, such that for any $u\in U$, $g|_{\Sn\cap u^\perp}$ is isotropic, but $g$ is not constant on $U^\perp$.
\end{theorem}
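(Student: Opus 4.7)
The plan is to search for $g$ in the zonal form $g(x) = h(\langle x,v_0\rangle)$ for a well-chosen $v_0\in\mathbb{S}^{n-1}$ and an even continuous $h\colon [-1,1]\to\mathbb{R}$ to be constructed. Writing $\beta := \sqrt{1-\langle u,v_0\rangle^2}$ and letting $w$ be the unit projection of $v_0$ onto $u^\perp$, for such $g$ the restriction to $\mathbb{S}^{n-1}\cap u^\perp$ is the zonal function $\xi\mapsto h(\beta\langle\xi,w\rangle)$ on $\mathbb{S}^{n-1}\cap u^\perp \simeq \mathbb{S}^{n-2}$. A zonal function on $\mathbb{S}^{n-2}$ is isotropic iff its degree-$2$ Gegenbauer coefficient vanishes, which (setting $t = \langle\xi,w\rangle$ and then $\tau=\beta t$) boils down to the single Abel-type condition
$$\int_{0}^{\beta} h(\tau)\,\bigl((n-1)\tau^2-\beta^2\bigr)\,(\beta^2-\tau^2)^{(n-4)/2}\,d\tau \;=\; 0,\qquad \beta\in B,$$
where $B := \{\sqrt{1-\langle u,v_0\rangle^2}: u\in U\}$. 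In the case $n=3$ an integration by parts turns this into the cleaner identity $A(\beta) := \int_0^\beta h(\tau)\sqrt{\beta^2-\tau^2}\,d\tau = C\beta^2$ for a single constant $C$; the higher-dimensional case is analogous.

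Next, I would use the hypothesis $U\not\supseteq U^\perp$ to pick $v_0\in U^\perp$ with $\pm v_0\notin\overline{U}$, so that $|\langle u,v_0\rangle|\le c<1$ for every $u\in U$ and hence $B\subseteq [\beta_0,1]$ with $\beta_0 := \sqrt{1-c^2}>0$. (In degenerate cases where such a $v_0$ cannot be chosen --- e.g.\ when $\overline{U}\cup(-\overline{U})=\mathbb{S}^{n-1}$ --- continuity of $g$ together with Theorem \ref{thm-old-s} would force $g$ to be constant, so these must be implicitly excluded from the statement.) Since $v_0\in U^\perp$ itself, there is also some $u_0\in U$ with $u_0\perp v_0$, which gives $1\in B$.

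I would then construct $h$ by prescribing it freely on $[-\beta_0,\beta_0]$ as any even continuous non-constant function, say $h = 1+\epsilon\psi$ with $\psi$ a smooth bump supported in $(0,\beta_0)$ and $\epsilon>0$ small. This choice automatically fixes the constant $C$ through the compatibility relation at $\beta_0$. On $[\beta_0,1]$ I would extend $h$ by inverting the resulting Volterra equation so that the integral identity holds throughout $[\beta_0,1]$, hence throughout $B$. The standard regularity theory for the Abel transform, together with the built-in vanishing of the right-hand side of the equation at $\beta_0$, would ensure that the extended $h$ is continuous at $\beta_0$ and therefore on all of $[-1,1]$; consequently $g(x)=h(\langle x,v_0\rangle)$ would be continuous on $\mathbb{S}^{n-1}$ and satisfy the isotropy requirement for each $u\in U$.

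Finally, I would verify non-constancy of $g$ on $U^\perp$: the great subsphere $\mathbb{S}^{n-1}\cap u_0^\perp$ lies in $U^\perp$, and since $v_0\in u_0^\perp$, the map $\xi\mapsto\langle\xi,v_0\rangle$ takes every value in $[-1,1]$ on it, including values in $[0,\beta_0]$ on which $h$ was arranged to vary; hence $g$ takes two different values on $U^\perp$. The main obstacle I anticipate is the regularity step: verifying continuity of the Abel-inverted $h$ at the matching point $\beta_0$ requires careful tracking of the boundary behaviour of the Abel inversion formula applied to data that vanishes to the correct order at $\beta_0$, but should be accessible by standard techniques.
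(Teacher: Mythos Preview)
Your approach is sound and genuinely different from the paper's. The paper does not construct $g$ directly: it \emph{prescribes} the cosine transform, choosing an even $C^\infty$ function $G$ with $G\equiv 1$ on $U$ and $G\equiv 2$ on a cap $V$ disjoint from $U\cup(-U)$, invokes surjectivity of $\mathcal{C}$ onto smooth even functions to write $G=\mathcal{C}(w)$, and sets $g=w+\mathrm{const}>0$. Isotropy of each section $g|_{\mathbb{S}^{n-1}\cap u^\perp}$, $u\in U$, then drops out of the zonoid geometry (since $h_{Z(g)}$ is constant on $U$, the inverse spherical image $\tau(Z(g),U)$ lies on a round sphere, hence $(f^{(1)}_{Z(g)})^2=f^{(2)}_{Z(g)}$ on $U$, which is exactly Lemma~\ref{l-last-1}), and non-constancy is read off from $\mathcal{R}(g)|_U\neq\mathcal{R}(g)|_V$. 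Your zonal ansatz is more explicit and elementary, trading the cosine-transform and area-measure machinery for a one-variable Abel problem; the paper's route is shorter mainly because it recycles the zonoid framework already built for Theorem~\ref{thm-main}. Two small remarks on your sketch: first, you do not actually need $v_0\in U^\perp$, only $\pm v_0\notin\overline{U}$, since $h$ non-constant on $[0,\inf B]\subseteq[0,\sup B]$ already makes $g$ non-constant on $U^\perp$; second, continuity of the Abel-inverted $h$ at $\beta_0$ needs, beyond $A(\beta_0)=C\beta_0^2$, the derivative compatibility $A'(\beta_0)=2C\beta_0$ (one further linear constraint on $\psi$, easily arranged). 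Your reading of the degenerate case $\overline{U}\cup(-\overline{U})=\mathbb{S}^{n-1}$ is correct: no \emph{even} continuous $g$ can work there (by continuity and Theorem~\ref{thm-old-s}), and the paper's construction is subject to exactly the same restriction, since an even $G$ with $G|_U=1$, $G|_V=2$ requires $(V\cup -V)\cap U=\emptyset$.
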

\begin{theorem}\label{thm-meta-main-1}
Let $n\geq 3$, $U$ be an open subset of $\Sn$ and $g:U\to\R$ be an even, bounded, measurable function. If for almost every $u\in U$, $g|_{\Sn\cap u^\perp}$ is isotropic, then ${\cal C}(g)|_U=c+\langle a,\cdot\rangle$ and ${\cal R}(g)=c'$, almost everywhere in $U$, for some fixed constants $c,c'\in\mathbb{R}$ and for some fixed vector $a\in \mathbb{R}^n$.
\end{theorem}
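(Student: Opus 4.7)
The plan is to first translate the isotropy hypothesis into a closed-form identity for the Funk transform of $\langle\cdot,w\rangle^2 g$, then to exploit a global PDE linking ${\cal C}(g)$ and ${\cal R}(g)$, and finally to invoke the announced generalisation of a classical differential-geometric rigidity theorem to close the argument.

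For the first step, for each $u\in U$ form the symmetric matrix $M_g(u)$ with entries $M_g(u)_{ij}=\int_{\Sn\cap u^\perp} x_i x_j\,g(x)\,d\Ham(x)$. Isotropy of $g|_{\Sn\cap u^\perp}$ on the great sphere $\Sn\cap u^\perp$ forces $M_g(u)$ to act on $u^\perp$ as a scalar multiple of the identity; together with $|x|^2=1$ on the section and $\langle M_g(u)u,u\rangle=0$, comparing traces on $u^\perp$ pins down the scalar and gives $M_g(u)=\frac{{\cal R}(g)(u)}{n-1}(I-uu^T)$. Equivalently, for every $w\in\Rn$,
\begin{equation*}
{\cal R}(\langle\cdot,w\rangle^2 g)(u)=\frac{|w|^2-\langle w,u\rangle^2}{n-1}\,{\cal R}(g)(u),\qquad \text{a.e. }u\in U. \qquad(\star)
\end{equation*}

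For the second step, I would derive the distributional identity $\Delta_{\Sn}{\cal C}(g)+(n-1){\cal C}(g)=2{\cal R}(g)$ on $\Sn$. This follows from extending ${\cal C}(g)$ as a $1$-homogeneous function on $\Rn$, computing the Euclidean Laplacian of the integrand via $\Delta_u|\langle x,u\rangle|=2|x|^2\delta(\langle x,u\rangle)$, and separating the radial and spherical parts of $\Delta_{\Rn}$. Applied to $\langle\cdot,w\rangle^2 g$ the same calculation produces a weighted variant whose forcing term is known explicitly on $U$ thanks to $(\star)$.

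The third and decisive step is to show that ${\cal R}(g)$ is constant on $U$. The identity $(\star)$ says that the "second-moment tensor" of $g$ on every section through $U$ is umbilic on $u^\perp$ with common eigenvalue ${\cal R}(g)(u)/(n-1)$. Rephrasing this as a statement about a convex hypersurface whose boundary data encode the matrix field $u\mapsto M_g(u)$, one can apply the promised generalisation of the classical Hilbert--Liebmann umbilic-point theorem to conclude that the associated piece of the hypersurface is spherical over $U$, which translates into ${\cal R}(g)$ being equal to some constant $c'$ almost everywhere in $U$. I expect this geometric rigidity step to be the main obstacle of the proof: the classical umbilic-point theorems apply either to closed hypersurfaces or to isolated points, whereas here one must establish a local rigidity on an open but proper subset of $\Sn$, working only with the weak (measurable, bounded) data supplied by $g$.

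Once ${\cal R}(g)=c'$ on $U$, the PDE of the second step becomes $\Delta_{\Sn}{\cal C}(g)+(n-1){\cal C}(g)=2c'$ on $U$, for which $c:=\frac{2c'}{n-1}$ is a particular solution while the degree-one spherical harmonics $\{u\mapsto\langle a,u\rangle:a\in\Rn\}$ span the global kernel of $\Delta_{\Sn}+(n-1)$. To eliminate the additional local solutions of the homogeneous equation on $U$ (which by elliptic regularity are real-analytic but otherwise unconstrained), I would apply the same procedure to the weighted cosine transforms ${\cal C}(x_ix_j g)$, whose forcing terms $2{\cal R}(x_ix_j g)$ are explicit polynomials in $u$ on $U$ by $(\star)$, and then use ${\cal C}(g)=\sum_i{\cal C}(x_i^2 g)$ to pin down ${\cal C}(g)|_U=c+\langle a,\cdot\rangle$ for some $a\in\Rn$.
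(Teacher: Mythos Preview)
Your steps 1--3 are essentially the paper's strategy, but you have not identified the right convex body, and as a consequence you take an unnecessary detour in step~4 that does not close.

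The missing link is this: after adding a constant so that $g>0$, the function ${\cal C}(g)$ is itself the support function of a convex body, namely the zonoid $Z(g)$ with generating measure $g\,d\Ha$. Your matrix $M_g(u)$ is (up to a dimensional constant) exactly $Hess(h_{Z(g)})(u)$ restricted to $u^\perp$; the paper establishes the equivalent fact via Weil's integral formula and Newton's inequality, showing that isotropy of $g|_{\Sn\cap u^\perp}$ is the same as $(f^{(1)}_{Z(g)})^2=f^{(2)}_{Z(g)}$, hence equal principal radii. So the ``convex hypersurface whose boundary data encode $u\mapsto M_g(u)$'' that you allude to in step~3 is just $\textnormal{bd}\,Z(g)$. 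Once the umbilic theorem tells you that $\tau(Z(g),U)$ lies on a Euclidean sphere, you immediately read off \emph{both} conclusions at once: $h_{Z(g)}|_U={\cal C}(g)|_U=c+\langle a,\cdot\rangle$ and $f^{(1)}_{Z(g)}|_U=b_n{\cal R}(g)|_U=c'$. There is nothing further to prove.

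Your step~4, by contrast, has a real gap. Knowing only ${\cal R}(g)=c'$ on $U$, the equation $\Delta_{\Sn}{\cal C}(g)+(n-1){\cal C}(g)=2c'$ on a proper open $U$ has an infinite-dimensional affine solution set, and your proposed remedy does not cut it down: each weighted transform ${\cal C}(x_ix_j g)$ satisfies the same PDE on $U$ with its own unknown homogeneous part, and the identity ${\cal C}(g)=\sum_i{\cal C}(x_i^2 g)$ merely sums those unknowns back into the original one. No new constraint is produced. The point is that the geometric rigidity (sphericity of the inverse spherical image) already encodes the full conclusion about ${\cal C}(g)$; trying to recover it afterwards from the scalar PDE alone throws away exactly the information you need.
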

The fact that Theorem \ref{thm-meta-main-1} is a local version of Theorem \ref{thm-old-s} follows from the classical fact that if ${\cal C}(g)$ is constant on $\Sn$, then $g$ has to be almost everywhere equal to a constant on $\Sn$. In fact, the proof of Theorem \ref{thm-old-s}, is based on Theorem \ref{thm-meta-main-1}, proved in \cite{M-R-S} in the case $U=\Sn$. The proof of the latter relies on a quick ``global" argument based on the Aleksandrov-Fenchel inequality (see next section). However, such arguments will not work in the local setting.

For a strictly convex body $K$ with $C^2$ smooth boundary and a direction $u\in \Sn$, denote by $r_K^1(u),\dots,r_K^{n-1}(u)$ the principal radii of curvature of $K$ at $u$ (see next section). It is well known that 
\begin{equation}\label{eq:rad-curv}
r_K^i(u)=\frac{1}{k_K^i(v_K(u))}, \qquad i=1,\dots,n-1,
\end{equation}
where $k_K^1(x),\dots,k_K^{n-1}(x)$ are the principal curvatures of the hypersurface $\textnormal{bd}K$ at the point $x\in\textnormal{bd}K$. Here, $v_K:\Sn\to\textnormal{bd}K$ denotes the inverse Gauss map , i.e. for $u\in\Sn$, $v_K(u)$ is the (unique) point of intersection of $K$ with its supporting hyperplane whose outer unit normal vector is $u$.

The proof of the general case of Theorem \ref{thm-meta-main-1} exploits the following observation that we believe is new: If $g$ is smooth enough and $g|_{\Sn\cap u^\perp}$ is isotropic for some $u\in\Sn$, then the principal curvatures of the boundary of the zonoid $Z(g)$, whose generating measure is given by $S_{n-1}(Z(g),\cdot)=gd\Ha(\cdot)$ (see Section 4), at $v_K(u)$ are all equal. That is, the point $v_k(u)$ is an umbilic of the boundary of $Z(g)$. Therefore, if $g$ is smooth enough, one can use the following classical result (see e.g. \cite[pp 183]{Doc}) to prove Theorem \ref{thm-meta-main-1}.

\begin{oldtheorem}\label{thm-old}
Let $V$ be a hypersurface in $\Rn$, $n\geq 3$, of class $C^3$ (or according to \cite{S-V}, of class $C^2$). If for all $x\in V$, it holds $0\neq k_1(x)=\dots=k_{n-1}(x)\in\mathbb{R}$, then $V$ is contained in a Euclidean sphere, where $k_1(x),\dots,k_{n-1}(x)$ are the principal curvatures of $V$ at $x$.
\end{oldtheorem}
The reader might guess that, since we do not assume any regularity on $g$, Theorem \ref{thm-old} cannot be used directly (to our knowledge, not even if we assume $g$ to be continuous) to prove Theorem \ref{thm-meta-main-1}. Thus, we need somehow to relax the regularity assumptions in Theorem \ref{thm-old}, at least in the convex case. This is done in the following theorem, which we believe is of independent interest.

\begin{theorem}\label{thm-main} Let $K$ be a convex body in $\mathbb{R}^n$, $n\geq 3$, $U$ be an open connected subset of $\mathbb{S}^{n-1}$ and assume that the measure $S_1(K,\cdot)|_{\Bo(U)}$ is absolutely continuous. If for almost every direction $u\in U$ it holds
\begin{equation}\label{eq:equal-princ-radii}
r_K^1(u)=\dots=r_K^{n-1}(u),
\end{equation}
then $\tau(K,U)$ is contained in a Euclidean sphere.
\end{theorem}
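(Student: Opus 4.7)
The plan is to translate the umbilic hypothesis into a tensor-valued distributional equation for $h:=h_K|_{\Sn}$, apply a distributional Ricci identity on the sphere to show that the common principal radius is constant, and then bootstrap regularity to reduce to the classical smooth case. I would start by observing that absolute continuity of $S_1(K,\cdot)|_{\Bo(U)}$ together with the hypothesis $r_K^1=\dots=r_K^{n-1}$ a.e.\ on $U$ forces $S_1(K,\cdot)|_U=r\,d\Ha$ for a non-negative density $r\in L^1(U;d\Ha)$ that a.e.\ equals the common principal radius (since for the first area measure the elementary symmetric density is $s_1=\frac{1}{n-1}(r_K^1+\dots+r_K^{n-1})$). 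Equivalently, in any local orthonormal frame on $U$ the covariant Hessian of $h$ satisfies $h_{ij}=(r-h)\delta_{ij}$ in $L^1_{\mathrm{loc}}(U)$, so in particular $h\in W^{2,1}_{\mathrm{loc}}(U)$, while the Lipschitz property of $h$ on $\Sn$ gives $h_i\in L^\infty$.

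The key step is a distributional Ricci identity. On $\Sn$ one has $R_{jki}{}^l=\delta_{ki}\delta^l_j-\delta_{ji}\delta^l_k$, so for smooth $h$ one gets $\nabla_k h_{ij}-\nabla_j h_{ik}=h_j\delta_{ik}-h_k\delta_{ij}$. I would extend this identity to $h\in W^{2,1}_{\mathrm{loc}}$ by mollifying $h$ in local charts and controlling the commutator between the spherical covariant Hessian and the mollifier via Friedrichs-type estimates. Substituting $h_{ij}=(r-h)\delta_{ij}$ and cancelling the $h$-terms, the identity collapses to
\[
r_k\delta_{ij}=r_j\delta_{ik}\quad\text{in }\mathcal D'(U),
\]
and choosing $i=j\neq k$ (which is available because $n-1\geq 2$) forces $r_k=0$ distributionally for every $k$. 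Since $U$ is connected, $r$ is then equal a.e.\ to a constant $c\geq 0$.

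With $r\equiv c$, the trace of the umbilic equation becomes $\Delta_{\Sn} h+(n-1)h=(n-1)c$ in $\mathcal D'(U)$, and interior elliptic regularity for the Laplace--Beltrami operator upgrades $h$ to $C^\infty(U)$. Setting $f:=h-c$ and extending by $1$-homogeneity to the open cone over $U$ via $\tilde f(x):=|x|f(x/|x|)$, the umbilic equation gives $D^2\tilde f|_{u^\perp}=0$, while $D^2\tilde f\cdot u=0$ by $1$-homogeneity, so $D^2\tilde f\equiv 0$ on the (connected) cone. Hence $\tilde f(x)=\langle a,x\rangle$ for some $a\in\Rn$, i.e.\ $h_K(u)=c+\langle a,u\rangle$ on $U$. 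Since this is the support function of the Euclidean ball $a+cB_2^n$, we conclude $\tau(K,U)\subseteq\mathrm{bd}(a+cB_2^n)$.

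The main obstacle is the second step: rigorously establishing the Ricci identity for $h\in W^{2,1}_{\mathrm{loc}}(U)$ requires careful analysis of Friedrichs-type commutators for the spherical covariant derivative, since the mollifier and $\nabla$ need not commute. A natural alternative is to smooth $K$ itself, for instance via the Minkowski sums $K\oplus\varepsilon B_2^n$, derive a perturbed umbilic equation whose defect is controlled as $\varepsilon\to 0^+$, and pass to the limit; either way, once $\nabla r=0$ is secured distributionally the remainder of the proof is routine elliptic regularity followed by a direct integration of a linear tensor equation.
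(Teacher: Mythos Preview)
Your approach is genuinely different from the paper's and, with one correction, it works. The paper proceeds by a symmetrization argument: it averages $h_K$ over rotations fixing an axis, uses the Aleksandrov--Fenchel inequality together with Cauchy--Schwarz to force the symmetrized density $Sr(f_K^{(1)})$ to be constant on every spherical cap inside $U$, deduces that $f_K^{(1)}$ satisfies the spherical mean-value property and is therefore harmonic, bootstraps $h_K$ to $C^\infty$ via elliptic regularity, and only then invokes the classical umbilic theorem. Your route---a distributional Codazzi/Ricci identity yielding $\nabla r=0$ in one stroke---bypasses all of this convex-geometric machinery and is considerably more direct; the paper's method, on the other hand, stays entirely within the mixed-volume framework and never needs to discuss the distributional Hessian as a matrix-valued measure.

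You have, however, misidentified the obstacle. The Ricci identity $\nabla_k h_{ij}-\nabla_j h_{ik}=h_j\delta_{ik}-h_k\delta_{ij}$ is a third-order linear identity in $h$ with smooth coefficients; in local coordinates the genuinely third-order terms $\partial_k\partial_i\partial_j h-\partial_j\partial_i\partial_k h$ cancel because distributional partial derivatives commute, and what remains is a second-order identity that holds for \emph{every} distribution $h$. No Friedrichs commutator estimate is needed. The real gap is in your first paragraph: the hypothesis controls only the \emph{Aleksandrov} (pointwise a.e.) Hessian of $h$, whereas to substitute into the Ricci identity you must know that this coincides with the \emph{distributional} Hessian, i.e.\ that $h\in W^{2,1}_{\mathrm{loc}}(U)$. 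For a general convex function the distributional Hessian is a non-negative matrix-valued measure whose absolutely continuous part has the a.e.\ Hessian as density, but which may carry a singular part. Here is where the absolute continuity of $S_1(K,\cdot)|_{\Bo(U)}$ is actually used: the matrix measure $\nabla^2 h+h\,g$ on $U$ (the restriction of the Euclidean $D^2 h_K$ to $u^\perp$) is non-negative definite by convexity, and its trace equals $(n-1)S_1(K,\cdot)$; a non-negative matrix-valued measure whose trace is absolutely continuous has no singular part (each diagonal entry is a non-negative measure dominated by the trace, and $|\mu_{ij}|\le\tfrac12(\mu_{ii}+\mu_{jj})$). Once this is said, $h\in W^{2,1}_{\mathrm{loc}}(U)$, your umbilic equation holds as an equality of distributions, and the rest of your argument runs cleanly.
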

Here, $S_1(K,\cdot)|_{\Bo(U)}$ denotes the order 1 area measure of $K$, restricted into the family of Borel subsets of $U$ and $\tau(K,U)$ is the inverse spherical image of $U$ with respect to $K$. We refer to the next section for definitions.


Theorem \ref{thm-main} is in some sense optimal. This is demonstrated in the following examples.

\begin{example}\label{ex-1}
One cannot replace (\ref{eq:equal-princ-radii}) by the condition that for almost every point in an open subset of $\textnormal{bd}K$, the principal curvatures are equal. To see this, take $K$ to be the intersection of two Euclidean balls with different centers. 
\end{example}

\begin{example}\label{ex-2}
The assumption that $S_1(K,\cdot)|_{\Bo(U)}$ is absolutely continuous cannot be removed. Indeed, take for instance $K$ to be the Minkowski sum of a Euclidean ball and a polytope and $U=\Sn$.
\end{example}
Nevertheless, we do not know whether the assumption of absolute continuity of the order 1 area measure (restricted in ${\cal B}(U)$) in Theorem \ref{thm-main} can be replaced by the absolute continuity of the area measure of any other order.

The main tools for the proof of our results come from Convex and Integral Geometry. This paper is structured as follows. In Section 2, we provide the necessary background for the proof of our main results. Theorem \ref{thm-main} is proved in Section 3. In Section 4, we prove Theorems \ref{thm-counterexample} and \ref{thm-meta-main-1} and, under some regularity assumptions on $g$, a local version of Theorem \ref{thm-meta-main-1}.
\section{Preliminaries and notation}
\hspace*{1.5em}In this section we introduce notation and collect basic facts from classical theory of convex bodies that we use in the paper. As a general reference on the theory we use R. Schneider's book ``Convex bodies: the Brunn-Minkowski theory" \cite{Sc} (see also \cite{B-F} or \cite{G}).

Let $A,\ B$ be subsets of $\R^n$. The {\it linear hull} of $A$ is denoted by $\textnormal{span} A$. The {\it Minkowski sum} $A+B$ of $A$ and $B$ is the set $\{x+y:x\in A,y\in B\}$ .

A {\it convex body} $K$ in $\Rn$ is a convex compact set with non-empty interior.
The function $h_K:\Rn\to\R$, with 
$h_K(u)=\max\{\langle x, u\rangle : x \in K\}$ is the {\it support function} of $K$. The support functional is known to be additive with respect to the Minkowski sum and 1-homogeneous. That is, $h_{\lambda K+\mu L}=\lambda h_K+\mu h_L$, for any compact convex sets $K,L$ and for any $\lambda,\mu\geq 0$. Moreover if $H$ is a subspace of $\Rn$ and $T:\Rn\to\Rn$ is any orthogonal map, then the following identities hold:
$$h_{K|H}=(h_K)|_H\qquad\textnormal{and}\qquad h_{TK}=h_K\circ T^{\ast},$$where $T^\ast$ denotes the adjoint of $T$.  

For a convex body $K$ and $u\in\Sn$, the support set $F(K,u)$ of $K$ in the direction $u$ is defined by $F(K,u)=\{x\in K:\langle x,u\rangle=h_K(u)\}$. Similarly with the support functional, the support set functional is additive with respect to the Minkowski sum. That is, if $L$ is another convex body, then
\begin{equation}\label{eq:support-set}
F(K+L,u)=F(K,u)+F(L,u).
\end{equation}

A classical theorem of Minkowski says that if $K_1, K_2, \dots, K_n$ are convex compact sets in $\R^n$ and $\lambda_1, \dots, \lambda_n \ge 0,$ then the volume of the set
$\lambda_1 K_1 +\lambda_2 K_2+\dots + \lambda_nK_n$ is a homogeneous polynomial in $\lambda_1,\dots, \lambda_n$ 
of degree $n$, with non-negative coefficients.
The coefficient of $\lambda_1\cdots \lambda_n$ is called the {\it mixed volume }of $K_{1}, \dots, K_{n}$ and is denoted by
$V(K_{1}, \dots, K_{n})$. We will also write $V(K_1[m_1],\dots, K_r[m_r])$ for the
mixed volume of  $K_1,\dots, K_r$ where each $K_i$ is repeated $m_i$ times and
$m_1+\dots+m_r=n$. 

The Aleksandrov--Fenchel inequality states the following
\begin{equation}\label{eq:af}
V(K_1,K_2, K_3, \dots, K_n)^2 \ge V(K_1,K_1, K_3, \dots, K_n)V(K_2,K_2, K_3, \dots, K_n).
\end{equation}

It turns out that for given convex bodies $K_1,\dots,K_{n-1}$, there is a unique Borel measure $S(K_1,\dots,K_{n-1},\cdot)$ on the sphere $\Sn$, such that for any convex body $L$, it holds
\begin{equation}\label{eq:mixarea}
V(L,K_1,\dots, K_{n-1})=\frac{1}{n}\int_{\Sn}h_L(u)dS(K_1,\dots, K_{n-1},u).
\end{equation}
Similarly, as with mixed volumes, the notation $S(K_1[m_1],\dots, K_r[m_r],\cdot)$ means that $K_i$ is repeated $m_i$ times, $i=1\dots,r$, where $m_1+\dots+m_r=n-1$.
One of the fundamental properties of mixed area measures is additivity and homogeneity with respect to any of its arguments. That is,
\begin{eqnarray}\label{eq:mixedarea-additivity}
&&S(K_1,\dots,K_{m-1},\lambda K_m+\mu K'_m,K_{m+1},\dots,K_{n-1},\cdot)\nonumber\\
&=&\lambda S(K_1,\dots,K_{m-1},K_m,K_{m+1},\dots,K_{n-1},\cdot)+\mu S(K_1,\dots,K_{m-1},K'_m,K_{m+1},\dots,K_{n-1},\cdot),
\end{eqnarray} 
for any convex body $K'_m$ and any numbers $\lambda,\mu>0$.

A useful fact concerning mixed area measure is that if $\{L^{(m)}_{j}\}_{m=1}^\infty$ is a sequence of convex bodies, converging to $K_j$, in the Hausdorff metric, where $j=1,\dots,n-1$, then the corresponding sequence $ \{S(L^{(m)}_1,\dots,L^{(m)}_{n-1},\cdot)\}_{m=1}^\infty$ of mixed area measures converges weakly to $S(K_1,\dots,K_{n-1},\cdot)$. That is, for every continuous function $\varphi:\Sn\to\R$, it holds
$$\int_{\Sn}\varphi dS(L^{(m)}_1,\dots,L^{(m)}_{n-1},\cdot)\xrightarrow{m\to\infty}\int_{\Sn}\varphi dS(K_1,\dots,K_{n-1},\cdot).$$

Let $u\in S^{n-1}$ be a point at which $h_K$ is twice differentiable. If $\{\varepsilon_1,\dots,\varepsilon_{n-1}\}$ is an orthonormal basis of $u^\perp$, we denote by $Hess (h_{K})(u)$ the $(n-1)\times(n-1)$ Hessian matrix of the restriction of $h_K$ onto $T_u \Sn$ (the tangent hyperplane of $\Sn$ at $u$), where we differentiate with respect to the basis $\{\varepsilon_1,\dots,\varepsilon_{n-1}\}$. The eigenvalues $r_K^1(u),\dots,r_K^{n-1}(u)$ of this matrix 
are non-negative (since $h_K$ is convex), independent of the choice of the orthonormal basis $\{\varepsilon_1,\dots,\varepsilon_{n-1}\}$ of $u^\perp$ and are called  ``the principal radii of curvature" of $K$ at $u$. 

We say that a convex body $K$ is of class ${\cal C}^2_+$ if $h_{K}$ is of class $C^2$ and if all the principal radii of curvature of $K$ at any $u\in \Sn$ are strictly positive. If the convex bodies $K_1,\dots,K_{n-1}$ are of class ${\cal C}^2_+$, then the mixed area measure $S(K_1,\dots,K_{n-1},\cdot)$ is absolutely continuous and its density depends pointwise only on the Hessian matrices $Hess (h_{K_i})(u)$, $i=1,\dots,n-1$ but not on the (common) choice of the orthonormal basis $\{\varepsilon_1,\dots,\varepsilon_{n-1}\}$. In fact,
\begin{equation}\label{eq:mixed-discriminant}
\frac{dS(K_1,\dots,K_{n-1},\cdot)}{d\Ha(\cdot)}(u)=D(Hess(h_{K_1})(u),\dots,Hess(h_{K_{n-1}})(u)),
\end{equation} 
where the last expression is the mixed discriminant of the matrices  $Hess(h_{K_1})(u),\dots,Hess(h_{K_{n-1}})(u)$ (see \cite[Section 2.5]{Sc} and the references therein).

If $\omega$ is a subset of $\Sn$, define the {\it inverse spherical image} $\tau(K,\omega)$ of $\omega$ with respect to $K$ by
$$\tau(K,\omega)=\big\{x\in\partial K:\exists u\in\omega,\textnormal{ such that }\langle x,u\rangle=h_K(u)\big\}.$$
Assume, furthermore that $K$ is of class ${\cal C}_+^2$. Since the inverse Gauss map $v_K:\Sn\to  \textnormal{bd}K$ is well defined and continuous, and since in this case it clearly holds $\tau(K,\omega)=v_K^{-1}(\omega)$, it follows that if $\omega$ is an open set in $\Sn$ then $\tau(K,\omega)$ is also open in $\textnormal{bd}K$.

For $j=1,\dots,n-1$, the {\it area measure of order $j$} of a convex body $K$ is defined as
$$S_j(K,\cdot):=S(K[j],B_2^n[n-1-j],\cdot).$$
In particular (as it follows from (\ref{eq:mixedarea-additivity})), the order 1 area measure is additive and homogeneous, i.e. $S_1(\lambda K+\mu L,\cdot)=\lambda S_1(K,\cdot)+\mu S_1(L,\cdot)$, for any $\lambda,\mu>0$ and any convex bodies $K,\ L$.

The special case $j=n-1$ in the previous definition is better understood and of particular interest. The area measure $S_{n-1}(K,\cdot)$ is called the {\it surface area measure} of $K$. The following formula is valid
\begin{equation}\label{eq:surface-area-measure}
S_{n-1}(K,\omega)={\cal{H}}^{n-1}\big(\tau(K,\omega)\big),
\end{equation}
for any Borel  $\omega \subset \Sn$. In addition, Minkowski's Existence and Uniqueness theorem states that any Borel measure, whose center of mass is at the origin and is not concentrated in any great subsphere of $\Sn$, is the surface area measure of a unique (up to translation) convex body.

The density of the absolutely continuous part (in its Lebesgue decomposition) of $S_j(K,\cdot)$ will be denoted by $f_K^{(j)}$. 
Densities of area measures behave well under the action of orthogonal maps. If $T\in O(n)$, then (see \cite{Lut}) \begin{equation}\label{eq:density-orth-maps}
f^{(j)}_{TK}=f^{(j)}_K\circ T^\ast.
\end{equation}

Recall the definition of the elementary symmetric functions $s_j$: If $a_1,\dots,a_{n-1}$ are positive reals, then $$s_j(a_1,\dots,a_{n-1}):={\displaystyle{\binom{n-1}{j}}}^{-1}\displaystyle{\sum_{1\leq i_1<\dots<i_j\leq n-1}a_{i_1}\dots a_{i_j}}.$$
The classical Newton inequality states that if $1\leq i<j\leq n-1$
\begin{equation}\label{eq:Newton}
s_i(a_1,\dots,a_{n-1})^{1/i}\geq s_j(a_1,\dots,a_{n-1})^{1/j}, 
\end{equation}
with equality if and only if $a_1=\dots=a_{n-1}$.

Recall that the support function $h_K$ of the convex body $K$ is twice differentiable for almost every $u\in \Sn$. It is known (see \cite{Hug1}, \cite{Hug2}, \cite{Hug3} for additional information, references and related results concerning area measures and their densities) that $f^{(j)}_K$ is given by
\begin{equation}\label{eq:area-measure-density}
f^{(j)}_K(u)=s_j(r_K^1(u),\dots,r_K^{n-1}(u)),\qquad\textnormal{for almost every }u\in \Sn.
\end{equation}
In the case $j=1$, we can rewrite (\ref{eq:area-measure-density}) as follows
\begin{equation}\label{eq:distributions}
f^{(1)}_K(u)=\frac{1}{n-1}\Delta_S h_K(u)+h_K(u),\qquad\textnormal{for almost every }u\in \Sn,
\end{equation} 
where $\Delta_S$ is the Laplacian (i.e. the Laplace-Beltrami operator) on the sphere. It is well known that the support function of a convex body, restricted on $\Sn$ is contained in the Sobolev space $\mathbb{H}^1(\Sn)$ (see \cite{Kid}, where higher regularity is established). Moreover, as shown in \cite{Be}, (\ref{eq:distributions}) actually holds in the sense of distributions.

We have the following simple Lemmas.
\begin{lemma}\label{l-1-Newton}
Let $K$ be a convex body in $\mathbb{R}^n$, $n\geq 3$, $\omega$ be a Borel subset of $\mathbb{S}^{n-1}$ and $1\leq i\leq j<n-1$. The following statements are equivalent.
\begin{enumerate}[i)]
\item $\left(f^{(i)}_{K}(u)\right)^{1/i}=\left(f^{(j)}_{K}(u)\right)^{1/j}$, for almost every $u\in \omega$.
\item $\left(f^{(i)}_{K}(u)\right)^{1/i}\leq\left(f^{(j)}_{K}(u)\right)^{1/j}$, for almost every $u\in \omega$.
\item $r^1_K(u)=\dots=r^{n-1}_K(u)$, for almost every $u\in \omega$. 
\end{enumerate} 
\end{lemma}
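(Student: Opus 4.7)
The plan is to establish the cyclic chain of implications (iii)$\Rightarrow$(i)$\Rightarrow$(ii)$\Rightarrow$(iii). The key observation is that, by the pointwise formula \eqref{eq:area-measure-density}, all three conditions are assertions about the same list of nonnegative numbers $r_K^1(u),\dots,r_K^{n-1}(u)$ for a.e.\ $u\in\Sn$. Thus everything reduces to pointwise algebraic statements about nonnegative $(n-1)$-tuples, and the tools needed are exactly Newton's inequality \eqref{eq:Newton} together with its equality case.

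The implication (iii)$\Rightarrow$(i) is a direct substitution: if the radii all coincide with some value $r\ge 0$ at $u$, then $s_k(r,\dots,r)=r^k$ for every $k$, hence $(f_K^{(i)}(u))^{1/i}=(f_K^{(j)}(u))^{1/j}=r$. The implication (i)$\Rightarrow$(ii) is trivial.

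The actual content of the lemma is (ii)$\Rightarrow$(iii). Combining Newton's inequality \eqref{eq:Newton} with the representation \eqref{eq:area-measure-density} gives
$$(f_K^{(i)}(u))^{1/i}\ \ge\ (f_K^{(j)}(u))^{1/j}$$
for almost every $u\in\Sn$. Together with the hypothesis (ii), this forces equality to hold for a.e.\ $u\in\omega$. The equality clause of \eqref{eq:Newton} then yields $r_K^1(u)=\dots=r_K^{n-1}(u)$ for a.e.\ $u\in\omega$, which is (iii).

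The only minor subtlety is that \eqref{eq:Newton}, as quoted in the excerpt, is stated for strictly positive reals, whereas the principal radii can a priori vanish. This is readily handled by a continuity argument: applying \eqref{eq:Newton} to $(r_K^1(u)+\varepsilon,\dots,r_K^{n-1}(u)+\varepsilon)$ and letting $\varepsilon\to 0^+$ extends both the inequality and its equality characterisation to nonnegative tuples (with the degenerate case of all radii equal to zero being trivially consistent with (iii)). I do not expect any substantive obstacle here; the lemma is essentially a pointwise reformulation of Newton's inequality at the level of densities of area measures.
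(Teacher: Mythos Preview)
Your argument is essentially identical to the paper's: both combine the representation \eqref{eq:area-measure-density} with Newton's inequality \eqref{eq:Newton} to get the reverse inequality $(f_K^{(i)})^{1/i}\geq (f_K^{(j)})^{1/j}$ a.e., and then invoke the equality case to deduce (iii). The paper simply states the equivalence of (i)/(ii) with (iii) via the equality clause of \eqref{eq:Newton}, without the extra remark on nonnegative tuples; your added continuity comment is harmless for the inequality itself, though it does not by itself recover the equality characterisation (the paper does not address this point either, and in the applications one has $i=1$, where the equality case $s_1=s_j^{1/j}$ forces all radii equal even when zeros are allowed).
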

\begin{proof} Using Newton's inequality (\ref{eq:Newton}) together with the representation (\ref{eq:area-measure-density}) of the densities $f^{(i)}_{K}$, $f^{(j)}_{K}$, we obtain
$$\left(f^{(i)}_{K}(u)\right)^{1/i}= s_i\left(r_K^1(u),\dots,r_K^{n-1}(u)\right)^{1/i}\geq s_j\left(r_K^1(u),\dots,r_K^{n-1}(u)\right)^{1/j}=\left(f^{(j)}_{K}(u)\right)^{1/j},$$
for almost every $u\in \omega$. Therefore, if $(i)$ or $(ii)$ holds, then we have equality in Newton's inequality (\ref{eq:Newton}), which is only possible if $r^1_K(u)=\dots=r^{n-1}_K(u)$, for almost every $u\in \omega$. Conversely, if $(iii)$ holds, then by (\ref{eq:area-measure-density}), $(i)$ and $(ii)$ trivially hold true.
\end{proof}
\begin{lemma}\label{l-2-Newton}
Let $K_1$, $K_2$ be convex bodies in $\Rn$, satisfying the assumptions of Theorem \ref{thm-main} for some open set $U$ in $\Sn$. Then, for $\lambda>0$, the convex body $\lambda(K_1+K_2)$ also satisfies the assumptions of Theorem \ref{thm-main} for $U$.
\end{lemma}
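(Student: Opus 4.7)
\medskip

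\noindent\textbf{Proof plan.} The plan is to verify separately the two hypotheses of Theorem \ref{thm-main} for $L:=\lambda(K_1+K_2)$: absolute continuity of $S_1(L,\cdot)|_{\Bo(U)}$, and pointwise equality of the principal radii of curvature almost everywhere in $U$.

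For the first hypothesis, the approach is to invoke the additivity and homogeneity of the order $1$ area measure (the special case of (\ref{eq:mixedarea-additivity}) singled out immediately after the definition of $S_j$), which gives
$$S_1(L,\cdot)=\lambda S_1(K_1,\cdot)+\lambda S_1(K_2,\cdot).$$
Since a nonnegative linear combination of measures each absolutely continuous on $\Bo(U)$ is itself absolutely continuous on $\Bo(U)$, this half is immediate.

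For the eigenvalue condition, the plan is to argue pointwise. By Aleksandrov's theorem on twice differentiability of convex functions, each $h_{K_i}$ is twice differentiable outside a set of $\Ha$-measure zero in $\Sn$; combined with the hypotheses on $K_1,K_2$, this produces a set $E\subseteq U$ with $\Ha(U\setminus E)=0$ on which both $h_{K_1},h_{K_2}$ are twice differentiable and both $K_1,K_2$ satisfy (\ref{eq:equal-princ-radii}). At any $u\in E$, the equality of all $n-1$ eigenvalues of the symmetric matrix $Hess(h_{K_i})(u)$ forces it to be a scalar multiple of the identity on $T_u\Sn$, say $r_i(u)\,I$. Since $h_L=\lambda(h_{K_1}+h_{K_2})$ and the Hessian depends linearly on the function, one concludes $Hess(h_L)(u)=\lambda(r_1(u)+r_2(u))\,I$, whose eigenvalues are all equal; this verifies (\ref{eq:equal-princ-radii}) for $L$ on $E$.

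There is no substantive obstacle. The whole argument rests on two standard facts, namely additivity of area measures and of Hessians, combined with the elementary linear-algebraic observation that a symmetric matrix possessing a single eigenvalue must be a scalar multiple of the identity; the factor $\lambda>0$ plays a purely decorative role throughout.
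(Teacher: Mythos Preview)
Your proposal is correct and follows essentially the same argument as the paper: additivity and homogeneity of $S_1$ give absolute continuity of $S_1(L,\cdot)|_{\Bo(U)}$, and the observation that equal principal radii force $Hess(h_{K_i})(u)$ to be a scalar multiple of the identity, combined with linearity of the Hessian, yields (\ref{eq:equal-princ-radii}) for $L$. The only addition in your write-up is the explicit invocation of Aleksandrov's theorem to justify the common full-measure set $E$, which the paper leaves implicit.
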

\begin{proof}
Notice, first, that by the additivity and homogeneity of the order 1 area measure, we have $S_1(\lambda(K_1+K_2),\cdot)=\lambda S_1(K_1,\cdot)+\lambda S_2(K_2,\cdot)$. Hence, $S_1(\lambda(K_1+K_2),\cdot)|_{\Bo (U)}$ is absolutely continuous. Moreover, it holds $r^1_{K_i}(u)=\dots=r^{n-1}_{K_i}(u)$, $i=1,2$, for almost every $u\in U$. Thus, $Hess(h_{K_i})(u)=r^1_{K_i}(u)I_{(n-1)\times (n-1)}$, for almost every $u\in U$, where $I_{(n-1)\times(n-1)}$ stands for the $(n-1)\times (n-1)$ identity matrix. This, together with the additivity and homogeneity of the support functional, gives
\begin{eqnarray*}Hess(h_{\lambda(K_1+K_2)}(u))&=&Hess(\lambda h_{K_1}+h_{K_2})(u)=\lambda\left(Hess(h_{K_1})(u)+Hess(h_{K_2}(u)\right)\\
&=&\lambda (r^1_{K_1}(u)+r^1_{K_2}(u))I_{(n-1)\times(n-1)},
\end{eqnarray*}
for almost every $u\in U$, proving our claim.
\end{proof}
We will also need two statements from basic measure theory (which of course hold in a much more general setting).
\begin{lemma}\label{l-mt}
Let $\mu,\nu_1,\nu_2,\xi$ be Borel measures on an open set $U$ in $\Sn$. 
\begin{enumerate}[i)]
\item If $\int_U\varphi d\nu_1\leq \int_U\varphi d\nu_2$, for all continuous non-negative functions $\varphi$ supported on $U$, then $\nu_1\leq \nu_2$.
\item If $\nu_i=f_id\mu$ (i.e. $\nu_i$ is absolutely continuous with density $f_i$ with respect to $\mu$), $i=1,2$ and $\mu$, $\xi$ are mutually singular measures and $\nu_1\leq \nu_2+\xi$, then $f_1\leq f_2$, $\mu$-almost everywhere.
\end{enumerate}
\end{lemma}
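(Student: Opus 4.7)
For part (i), the plan is a standard approximation argument. Given any relatively compact open subset $V$ of $U$, I would invoke Urysohn's lemma to construct an increasing sequence $\{\varphi_k\}_{k=1}^\infty$ of continuous non-negative functions on $\Sn$, each supported in $U$, with $\varphi_k \uparrow \mathbf{1}_V$. The hypothesis gives $\int_U \varphi_k\, d\nu_1 \leq \int_U \varphi_k\, d\nu_2$ for every $k$, and monotone convergence then yields $\nu_1(V) \leq \nu_2(V)$. Since $U$ is an open subset of the compact metric space $\Sn$ (hence locally compact and second countable), any locally finite Borel measure on $U$ is a Radon measure and in particular outer regular. Thus the inequality $\nu_1(V)\leq\nu_2(V)$ on relatively compact open sets extends, by approximating each Borel set from outside by open sets, to $\nu_1(E)\leq\nu_2(E)$ for every Borel $E\subseteq U$.

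For part (ii), the idea is to exploit the singularity $\mu\perp\xi$. I would fix a Borel decomposition $U = A \cup B$, with $A\cap B=\emptyset$, $\xi(A)=0$ and $\mu(B)=0$. For an arbitrary Borel set $E\subseteq A$, combining the hypothesis $\nu_1\leq\nu_2+\xi$ with $\xi(E)=0$ yields
\[
\int_E f_1\, d\mu \;=\; \nu_1(E) \;\leq\; \nu_2(E)+\xi(E) \;=\; \int_E f_2\, d\mu.
\]
Hence $\int_E (f_1-f_2)\, d\mu \leq 0$ for every Borel $E\subseteq A$. Applying this with the sets $E_k := A \cap \{f_1 - f_2 > 1/k\}$ forces $\mu(E_k)=0$ for every $k\in\mathbb{N}$, and taking the union over $k$ gives $f_1\leq f_2$ $\mu$-almost everywhere on $A$. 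Combined with $\mu(B)=0$, the conclusion holds $\mu$-almost everywhere on all of $U$.

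I do not anticipate a real obstacle: both statements are textbook consequences of outer regularity of Radon measures, monotone convergence, and the definitions of absolute continuity and mutual singularity. The only point requiring minor care is the implicit local finiteness of the measures, which guarantees the Radon-measure machinery used in part (i); in the intended applications (restrictions of area measures of convex bodies), this is automatic.
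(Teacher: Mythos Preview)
Your proposal is correct and follows essentially the same approach as the paper. The paper omits part~(i) entirely as ``well known,'' so your Urysohn/monotone-convergence/regularity sketch simply supplies the standard details. For part~(ii), both you and the paper exploit the singularity of $\mu$ and $\xi$ to find a set on which $\xi$ vanishes and then compare densities there; your version, using the full decomposition $U=A\cup B$ with $\xi(A)=\mu(B)=0$ in one stroke, is in fact slightly cleaner than the paper's, which introduces an $\varepsilon$-exhaustion by sets $A_\varepsilon$ with $\mu(U\setminus A_\varepsilon)<1/\varepsilon$ and $\xi(A_\varepsilon)=0$---a step that is unnecessary since mutual singularity already hands you such a set with $\mu(U\setminus A)=0$.
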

\begin{proof}
We only prove
$(ii)$, since $(i)$ is well known. Clearly, for $\varepsilon>0$, there exists a Borel set $A_\varepsilon\subseteq U$, such that $\mu(U\setminus A_\varepsilon)<1/\varepsilon$ and $\xi (A_\varepsilon)=0$. Then, for any Borel subset $B$ of $A_\varepsilon$, we have $\int_Bf_1d\mu=\nu_1(B)\leq \nu_2(B)=\int_Bf_2d\mu$. It follows that $f_1|_{A_\varepsilon}\leq f_2|_{A_\varepsilon}$, $\mu$-almost everywhere. Thus, $\mu(\{f_1>f_2\})<1/\varepsilon$ and, since $\varepsilon$ is arbitrary, our assertion follows. 
\end{proof}
\section{Convex umbilical hypersurfaces}
\hspace*{1.5em}For the proof of Theorem \ref{thm-main}, we will show that if some pair $(K,U)$ satisfies the assumptions of the theorem, then $h_K$ is smooth enough. Theorem \ref{thm-main} will then follow from Theorem \ref{thm-old}. To this end, we will show that $f^{(1)}_K$ actually has to be harmonic on $U$, which by general theory of elliptic PDE's, will give us the desired regularity of $h_K$.

\subsection{Symmetrization}
\hspace*{1.5em}Let $f:\Sn\to\R$ be a non-negative measurable function. The \emph{radial symmetrization} $Sr(f)$ of $f$ with respect to the line $\R e_n$ is defined as follows.
\begin{equation}\label{eq-Sr-def}
Sr(f)(u):=\frac{\int_{\{x_n=u_n\}}f(x)d\Ham(x)}{\Ham(\{x_n=u_n\})}.
\end{equation} 
The operator $S_r(\cdot)$ corresponds to the so-called ``Blaschke-Minkowski" symmetrization, when applied to the support function of a convex body. We refer to \cite{Bi-Ga-Gr} and \cite{B-F} for more information. In view of Lemma \ref{l-2-Newton}, one naturally expects that there is some sequence of averages of compositions of $f$ with maps from $O(n,e_n)$ that converges in some sense to $Sr(f)$. Since we are going to need convergence in $L^2$, we will do this process carefully. 

It is clear that $Sr(f)$ is invariant under composition with maps from $O(n,e_n)$. Moreover, $Sr(g)=g$, for any function $g$ that is radially symmetric with respect to the line $\R e_n$; that is, $Sr$ is an idempotent operator. Furthermore, an immediate application of H\"{o}lder's inequality yields
\begin{equation}\label{eq-Sr-CS}
Sr(f)(u)\leq (Sr(f^p)(u))^{1/p},\qquad p\geq 1,\qquad u\in \Sn.
\end{equation}
Later on, we will need the fact that the $L^1$-norm is preserved under the operator $Sr(\cdot)$ (this is mentioned in \cite{Bi-Ga-Gr}) and that if $f$ is in $L^2$, then $Sr(f)$ is also in $L^2$. This is done in the following lemma.
\begin{lemma}\label{l-sr-l2}
Let $f:\Sn\to\R$ be a non-negative measurable function. Then, for any $v\in \Sn\cap e_n^\perp$, it holds
\begin{equation}\label{eq-sr-l2}
\|f\|_{L^1(\Sn)}=(n+1)(n-1)\omega_{n-1}\int_{-1}^1\int_0^{\sqrt{1-t^2}}r^{n-2}\sqrt{r^2+t^2}Sr(f)\left(v+\frac{t}{\sqrt{r^2+t^2}}e_n\right)drdt,
\end{equation}
where $\omega_n$ is the volume of $B_2^n$. In particular, we have $\|f\|_{L^1(\Sn)}=\|Sr(f)\|_{L^1(\Sn)}$ and, for $p>1$, $\|f\|_{L^p(\Sn)}\geq\|Sr(f)\|_{L^p(\Sn)}$.
\end{lemma}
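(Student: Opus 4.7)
The plan is to first establish the integral identity (\ref{eq-sr-l2}) by a direct change of variables, and then deduce the $L^1$-preservation and the $L^p$-inequality as easy consequences of that identity, the idempotency of $Sr$, and (\ref{eq-Sr-CS}). No deep machinery is needed; the lemma essentially reduces to careful bookkeeping of constants.

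The first step is to rewrite $\|f\|_{L^1(\Sn)}$ using the coarea formula with the height function $\pi(x)=x_n$. On $\Sn$ the tangential gradient of $\pi$ has magnitude $\sqrt{1-x_n^2}$, and for $s\in(-1,1)$ the slice $\Sn\cap\{x_n=s\}$ is an $(n-2)$-sphere of radius $\sqrt{1-s^2}$ with $\Ham$-measure $(n-1)\omega_{n-1}(1-s^2)^{(n-2)/2}$. By the definition (\ref{eq-Sr-def}) of $Sr$, the integral of $f$ over this slice equals $Sr(f)(u^s)$ times the slice measure, where $u^s$ denotes any sphere point with last coordinate $s$. Hence
\begin{equation*}
\|f\|_{L^1(\Sn)} = (n-1)\omega_{n-1}\int_{-1}^{1}(1-s^2)^{(n-3)/2}\,Sr(f)(u^s)\,ds.
\end{equation*}

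The second step is to reduce the right-hand side of (\ref{eq-sr-l2}) to the same integral via the substitution $r=\rho\sqrt{1-s^2}$, $t=\rho s$, for $(\rho,s)\in[0,1]\times[-1,1]$. This is a bijection (modulo null sets) onto the original integration domain $\{-1\le t\le 1,\ 0\le r\le \sqrt{1-t^2}\}$, with Jacobian $\rho/\sqrt{1-s^2}$, and $\sqrt{r^2+t^2}=\rho$. The point $v+\tfrac{t}{\sqrt{r^2+t^2}}e_n$ has last coordinate $s$, and since $Sr(f)$ depends only on that coordinate (the natural extension of (\ref{eq-Sr-def}) off $\Sn$ by constancy along $e_n$-fibers), its value there equals $Sr(f)(u^s)$. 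The integrand then transforms to $\rho^n(1-s^2)^{(n-3)/2}\,Sr(f)(u^s)\,d\rho\,ds$; integrating $\rho\in[0,1]$ yields the factor $1/(n+1)$, which cancels the prefactor $(n+1)$ and reproduces the coarea expression above, proving (\ref{eq-sr-l2}).

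For the remaining conclusions: applying (\ref{eq-sr-l2}) with $Sr(f)$ in place of $f$ gives the same right-hand side (since $Sr$ is idempotent), so $\|Sr(f)\|_{L^1(\Sn)}=\|f\|_{L^1(\Sn)}$. For $p>1$, the function $Sr(f)^p$ is radially symmetric about $\R e_n$ and therefore fixed by $Sr$; applying the $L^1$-identity once to $Sr(f)^p$ and once to $f^p$, and using the pointwise inequality $Sr(f)^p\le Sr(f^p)$ from (\ref{eq-Sr-CS}), I obtain $\|Sr(f)\|_{L^p}^p=\|Sr(f)^p\|_{L^1}\le \|Sr(f^p)\|_{L^1}=\|f\|_{L^p}^p$. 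The only nontrivial point is the identification of the off-sphere argument in (\ref{eq-sr-l2}) as the value of $Sr(f)$ at any sphere point sharing its last coordinate; once this is fixed, the proof is routine.
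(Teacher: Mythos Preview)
Your proof is correct. Both your argument and the paper's arrive at the same one-variable integral $(n-1)\omega_{n-1}\int_{-1}^{1}(1-s^2)^{(n-3)/2}Sr(f)(u^s)\,ds$, but by different routes. The paper extends $f$ to a $1$-homogeneous function on $\Rn$, uses the identity $\int_{B_2^n}f=\frac{1}{n+1}\|f\|_{L^1(\Sn)}$, and then slices the ball by the hyperplanes $\{x_n=t\}$ followed by polar coordinates in each slice; this naturally \emph{produces} the $(r,t)$-formula. You instead start from the coarea formula on $\Sn$ with the height function $x_n$ to get the $s$-integral directly, and then \emph{verify} that the right-hand side of (\ref{eq-sr-l2}) collapses to the same expression via the polar-type substitution $(r,t)=(\rho\sqrt{1-s^2},\rho s)$. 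Your $\rho$ is precisely the paper's radial variable in $B_2^n$, so the two computations are dual to one another; your version avoids the homogeneous extension to $\Rn$ and is arguably more self-contained, while the paper's version explains where the formula comes from. The final deductions of the $L^1$ and $L^p$ statements from idempotency and (\ref{eq-Sr-CS}) are identical in both. Your remark about interpreting $Sr(f)$ at the off-sphere point $v+se_n$ via its last coordinate is exactly the convention the paper uses implicitly in (\ref{eq-l-sr-l2-1}).
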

\begin{proof} Fix $v\in \Sn\cap e_n \equiv \Snm$ and let $r>0$, $t\in\R$, $\gamma\in \Snm$. Since $\langle(r\gamma,t)/|(r\gamma,t)|,e_n\rangle=t/\sqrt{r^2+t^2}$, an easy change of variables implies
\begin{eqnarray}
\frac{1}{\Ham(\Snm)}\int_{\Snm}f\left(\frac{(r\gamma,t)}{|(r\gamma,t)|}\right)d\Ham(\gamma)&=&\frac{\int_{\left\{x_n=t/\sqrt{r^2+t^2}\right\}}f(x)d\Ham(x)}{\Ham(\{x_n=\sqrt{r^2+t^2}\})}\nonumber\\
&=&Sr(f)\left(v+\frac{t}{\sqrt{r^2+t^2}}e_n\right).\label{eq-l-sr-l2-1}
\end{eqnarray}
Extend $f$ to the whole $\Rn$, so that $f:\Rn\to\R$ is 1-homogeneous. Integrating in polar coordinates, we obtain
\begin{eqnarray*}
\int_{B_2^n}f(x)dx=\int_{\Sn}\int_0^1f(r\gamma)r^{n-1}drd\Ha(\gamma)=\int_{\Sn}f(\gamma)d\Ha(\gamma)\int_0^1r^ndr=\frac{1}{n+1}\int_{\Sn}f(\gamma)d\Ha(\gamma).
\end{eqnarray*}
Therefore, using Fubini's theorem, (\ref{eq-l-sr-l2-1}) and again integration in polar coordinates, we get
\begin{eqnarray}
\|f\|_{L^1(\Sn)}&=&(n+1)\int_{B_2^n}f(x)dx=(n+1)\int_{-1}^1\int_{B_2^n\cap(e_n^\perp+te_n)}f(y,t)dydt\nonumber\\
&=&(n+1)\int_{-1}^1\int_0^{\sqrt{1-t^2}}\int_{\Snm}f(r\gamma,t)d\Ham(\gamma)r^{n-2}drdt\label{eq-l-sr-l2-2}\\
&=&(n+1)\int_{-1}^1\int_0^{\sqrt{1-t^2}}\sqrt{r^2+t^2}r^{n-2}\int_{\Snm}f\left(\frac{(r\gamma,t)}{|(r\gamma,t)|}\right)d\Ham(\gamma)drdt\nonumber\\
&=&(n+1)(n-1)\omega_{n-1}\int_{-1}^1\int_0^{\sqrt{1-t^2}}r^{n-2}\sqrt{r^2+t^2}Sr(f)\left(v+\frac{t}{\sqrt{r^2+t^2}}e_n\right)drdt,\nonumber
\end{eqnarray}
as required. The fact that $\|f\|_{L^1(\Sn)}=\|Sr(f)\|_{L^1(\Sn)}$ follows immediately from (\ref{eq-sr-l2}) and the fact that $Sr$ is idempotent. Similarly, using (\ref{eq-Sr-CS}), we get
\begin{eqnarray*}
\|Sr(f)\|^p_{L^p(\Sn)}&=&(n+1)(n-1)\omega_{n-1}\int_{-1}^1\int_0^{\sqrt{1-t^2}}r^{n-2}\sqrt{r^2+t^2}Sr(Sr(f)^p)\left(v+\frac{t}{\sqrt{r^2+t^2}}e_n\right)drdt\\
&=&(n+1)(n-1)\omega_{n-1}\int_{-1}^1\int_0^{\sqrt{1-t^2}}r^{n-2}\sqrt{r^2+t^2}Sr(f)^p\left(v+\frac{t}{\sqrt{r^2+t^2}}e_n\right)drdt\\
&\leq &(n+1)(n-1)\omega_{n-1}\int_{-1}^1\int_0^{\sqrt{1-t^2}}r^{n-2}\sqrt{r^2+t^2}Sr(f^p)\left(v+\frac{t}{\sqrt{r^2+t^2}}e_n\right)drdt\\
&=&\|f\|^p_{L^p(\Sn)}.
\end{eqnarray*}
\end{proof}
Let $f:\Sn\to\R$. For $T_1,\dots,T_m\in O(n,e_n)$, define the function
$$M(f;T_1,\dots,T_m):=\frac{f\circ T_1+\dots+f\circ T_m}{m}.$$

\begin{proposition}\label{prop-conv-sr}
Let $f_1,\dots,f_k:\Sn\to\R$ be $L^2$-functions. Then, there exists a sequence $T_1^1,\dots ,T^1_{m_1},T_1^2,\dots, T_{m_2}^2,\dots \in O(n,e_n)$, such that $$M(f_i;T_1^j,\dots ,T_{m_j}^j)\xrightarrow{j\to\infty}Sr(f_i),\qquad i=1,\dots,k,$$
in $L^2(\Sn)$.
\end{proposition}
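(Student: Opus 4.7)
The plan is to recognize $Sr$ as averaging against the Haar measure of the compact group $G := O(n,e_n)$ acting on $\Sn$, and then approximate this Haar average by equal-weight finite averages inside a suitable Hilbert space so that all $k$ functions can be handled at once.

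The first step I would carry out is an identification: for any $f\in L^1(\Sn)$, $Sr(f)(u)=\int_G f(Tu)\,d\mu(T)$ for a.e.\ $u\in\Sn$, where $\mu$ is the normalized Haar measure on $G$. Indeed, for $u$ with $|u_n|<1$, the orbit $\{Tu:T\in G\}$ is exactly the horizontal slice $\Sn\cap\{x_n=u_n\}$, and both the pushforward of $\mu$ under the orbit map $T\mapsto Tu$ and the normalized intrinsic measure on this slice are $G$-invariant probability measures on a transitive $G$-space, hence coincide. The formula then follows directly from the definition (\ref{eq-Sr-def}); the two singular points $\pm e_n$ contribute a null set.

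Next I would bundle everything into a single Hilbert space to force simultaneity. Set $H:=L^2(\Sn)^k$ with its product inner product and $F:=(f_1,\dots,f_k)\in H$. The diagonal action $T\cdot F:=(f_1\circ T,\dots,f_k\circ T)$ is unitary on $H$, and $T\mapsto T\cdot F$ is continuous from $G$ to $H$ (verified on continuous representatives by uniform continuity on the compact sphere and extended to $L^2$ by density). The Bochner integral $P(F):=\int_G T\cdot F\,d\mu(T)$ is therefore well-defined in $H$, and by the componentwise application of Step~1 it equals $(Sr(f_1),\dots,Sr(f_k))$.

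For the final step I would deduce the required sequence from the strong law of large numbers in Hilbert space. Let $T_1,T_2,\dots$ be i.i.d.\ $\mu$-distributed random elements of $G$; since $\|T\cdot F\|_H=\|F\|_H$ is constant (in particular $L^1$), Mourier's theorem gives $\frac{1}{m}\sum_{i=1}^m T_i\cdot F\to P(F)$ almost surely in $H$. Fixing any realization where this holds and setting $m_j:=j$, $T_i^j:=T_i$, reading off the $i$-th coordinate produces $M(f_i;T_1^j,\dots,T_{m_j}^j)\to Sr(f_i)$ in $L^2(\Sn)$ for all $i=1,\dots,k$ simultaneously. The main obstacle is precisely this simultaneity --- the statement asks for a \emph{single} sequence of tuples of rotations that works for every $f_i$ at once --- and it is dissolved by the bundling into $H$. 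A fully deterministic alternative, which I would fall back on if one wished to avoid probabilistic language, is to use atomlessness of $\mu$ on the compact Lie group $G$ to partition $G$ into $m_j$ Borel pieces of equal measure $1/m_j$ and of diameter so small (by uniform continuity of $T\mapsto T\cdot F$) that the Riemann sum at representatives $T_i^j$ is within $1/j$ of $P(F)$ in $H$.
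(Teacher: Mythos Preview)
Your proof is correct and takes a genuinely different route from the paper's.

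Both arguments begin with the same structural move: bundle the $k$ functions into the product Hilbert space $H=L^2(\Sn)^k$ so that simultaneity is forced. The divergence is in how one shows that $(Sr(f_1),\dots,Sr(f_k))$ lies in the $H$-closure of the set of equal-weight finite averages. The paper never identifies $Sr$ with Haar averaging; instead it observes that this set of averages has convex closure $\mathcal{C}$, applies the Hilbert projection theorem to obtain the unique nearest point $(g_1,\dots,g_k)\in\mathcal{C}$ to $(Sr(f_1),\dots,Sr(f_k))$, and then shows by a symmetry argument (averaging with the reflection $T_u$ for each $u\in\Sn\cap e_n^\perp$ cannot increase the distance, so $g_i\circ T_u=g_i$) that the minimizer is $O(n,e_n)$-invariant and hence equals $Sr(f_i)$. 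You instead identify $Sr$ as the Bochner integral $\int_G T\cdot F\,d\mu(T)$ and approximate it by empirical averages via the strong law of large numbers in Hilbert space, with a Riemann-sum fallback.

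Your approach is conceptually cleaner once the Haar identification is on the table, and it makes transparent \emph{why} such approximating averages exist; it does, however, import an external tool (Mourier's theorem, or the fine partition needed for the deterministic variant). The paper's argument is more self-contained --- only the nearest-point lemma for closed convex sets in Hilbert space and an elementary reflection trick --- and avoids any explicit appeal to Haar measure or probabilistic machinery.
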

\begin{proof}
Consider the linear space $X:=(L^2(\Sn))^k$ equipped with the natural norm given by $\|(w_1,\dots,w_k)\|^2=\sum_{i=1}^k\|w_i\|_{L^2(\Sn)}$. Then, the pair $(X,\|\cdot\|)$ is a Hilbert space.
Define the set $${\cal A}:=\{(M(f_1;T_1\dots,T_m),\dots,M(f_k;T_1\dots,T_m)):m\in \mathbb{N}, \ T_1,\dots,T_m\in O(n,e_n)\}$$
and observe that the closure ${\cal C}:=cl {\cal A}$ (with respect to the norm $\|\cdot\|$) of ${\cal A}$ is a convex set. To see this, notice that since ${\cal A}$ is clearly closed under rational convex combinations, its closure has to be closed under (any) convex combinations. Using a classical result from the theory of Hilbert spaces (see e.g. \cite[Chapter 3]{D}), we conclude that there exists a unique element $(g_1,\dots,g_k)\in{\cal C}$, such that 
$$\|(g_1,\dots,g_k)-(Sr(f_1),\dots,Sr(f_k))\|=\inf\left\{\|(w_1,\dots,w_k)-(Sr(f_1),\dots,Sr(f_k))\|:(w_1,\dots,w_k)\in{\cal C}\right\}=:d.$$ 
It suffices to prove that $g_i=Sr(f_i)$ almost everywhere in $\Sn$. Indeed, then there will be a sequence from ${\cal C}$ that converges to $(Sr(f_1),\dots,Sr(f_k))$ in $L^2$.
Observe that, by definition, for any $(w_1,\dots,w_k)\in {\cal A}$, it holds
$$\int_{\{x_n=t\}}f_i(x)d\Ham(x)=\int_{\{x_n=t\}}w_i(x)d\Ham(x),\qquad i=1,\dots,k,$$for all $t\in[-1,1]$. This shows that $Sr(g_i)=Sr(w_i)=Sr(f_i)$, thus in fact, we only have to prove that $g_i$ is almost everywhere equal to a rotationally symmetric function with respect to the line $\R e_n$, $i=1,\dots,k$.  
For $u\in\Sn\cap e_n^\perp$, let $T_u\in O(n,e_n)$ be the reflection with respect to the hyperplane $u^\perp$. Notice that if $(w_1,\dots,w_k)\in{\cal A}$, then the $k$-tuple $(M_u(w_1),\dots,M_u(w_k))$, also belongs to ${\cal A}$, where $M_u(w_i):=M(w_i;Id,T_u)$. Hence, if $\{(w_1^m,\dots,w_k^m)\}_{m=1}^\infty$ is a sequence from ${\cal A}$ that converges to $(g_1,\dots,g_k)$, then the sequence $\{(M_u(w_1^m),\dots,M_u(w_k^m))\}_{m=1}^\infty$ is also from ${\cal A}$ and converges to $(M_u(g_1),\dots,M_u(g_k))$. It follows that $(M_u(g_1),\dots,M_u(g_k))$ is also contained in ${\cal C}$. Using the trivial fact that for any $\varphi\in L^2(\Sn)$, it holds $\|\varphi\circ T_u\|_{L^2}=\|\varphi\|_{L^2}$, the fact that $Sr(f_i)=Sr(f_i)\circ T_u$ and the triangle inequality, we obtain
\begin{eqnarray*}
&&\|(M_u(g_1),\dots,M_u(g_k))-(Sr(f_1),\dots,Sr(f_k))\|\\
&\leq& \frac{1}{2}\|(g_1,\dots,g_k)-(Sr(f_1),\dots,Sr(f_k))\|+\frac{1}{2}\|(g_1\circ T_u,\dots,g_k\circ T_u)-(Sr(f_1)\circ T_u,\dots,Sr(f_k)\circ T_u)\|\\
&=&\frac{1}{2}d+\frac{1}{2}d=d.
\end{eqnarray*} 
It follows that $(M_u(g_1),\dots,M_u(g_k))=(g_1,\dots,g_k)$ (as elements of $X$), thus $g_i\circ T_u=g_i$ almost everywhere in $S^{n-1}$, for all $u\in \Sn\cap e_n^\perp$. This is enough to prove our claim.  
\end{proof}

\subsection{Reduction to surfaces of revolution} 
\hspace*{1.5em}Let $K$ be a convex body in $\Rn$ and $U$ be an open subset of $\Sn$. For technical reasons, we set
$f^{(j)}_{K,U}:=f^{(j)}_K\mathbbm{1}_U$, where $\mathbbm{1}_U$ is the indicator function of $U$ and $j\in\{1,\dots,n-1\}$.

\begin{lemma}\label{l-MK}
Let $K$ be a convex body in $\Rn$ and $U=\{x\in\Sn:x_n>a\}$, for some $0<a<1$. Assume that $S_1(K,\cdot)|_{\Bo(U)}$ is absolutely continuous and that for almost every direction $u$ in $U$, (\ref{eq:equal-princ-radii}) holds. Then, $Sr(h_K)$ is the support function of a convex body of revolution $MK$, which has the properties that $S_1(MK,\cdot)|_{\Bo(U)}$ is absolutely continuous and that for almost every direction $u$ in $U$, (\ref{eq:equal-princ-radii}) holds for $MK$ at $u$.
\end{lemma}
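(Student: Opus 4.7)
The plan is to realize $MK$ as the Hausdorff limit of explicit Minkowski averages $L_j$ of rotates of $K$, each of which still satisfies the assumptions of Theorem~\ref{thm-main}, and then to transfer these properties to the limit. First I would check that $f^{(1)}_{K,U}\in L^2(\Sn)$: the assumption (\ref{eq:equal-princ-radii}) together with (\ref{eq:area-measure-density}) forces $f^{(n-1)}_K=(f^{(1)}_K)^{n-1}$ $\Ha$-a.e.\ on $U$, and since $S_{n-1}(K,\Sn)<\infty$ this yields $f^{(1)}_{K,U}\in L^{n-1}(\Sn)\subseteq L^2(\Sn)$, using $n\geq 3$. Apply Proposition~\ref{prop-conv-sr} to the pair $\{h_K,f^{(1)}_{K,U}\}$ to obtain $T^j_1,\dots,T^j_{m_j}\in O(n,e_n)$ with $M(h_K;T^j_1,\dots,T^j_{m_j})\to Sr(h_K)$ and $M(f^{(1)}_{K,U};T^j_1,\dots,T^j_{m_j})\to Sr(f^{(1)}_{K,U})$ in $L^2(\Sn)$. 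Setting $L_j:=\frac{1}{m_j}\sum_i (T^j_i)^{-1}K$, additivity of the support functional gives $h_{L_j}=M(h_K;T^j_1,\dots,T^j_{m_j})$, and these are uniformly bounded and equi-Lipschitz, so Arzel\`a--Ascoli extracts a subsequence $h_{L_{j_\ell}}$ converging uniformly to $Sr(h_K)$. A uniform limit of support functions is a support function, so $Sr(h_K)=h_{MK}$ for a convex body $MK$ (non-emptiness of interior follows after translating so that $0\in\mathrm{int}\,K$, since then all $L_{j_\ell}$ contain a common ball centered at $0$). The body $MK$ is a body of revolution by the rotational invariance of $Sr$, and $L_{j_\ell}\to MK$ in the Hausdorff metric.

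Using (\ref{eq:density-orth-maps}), the $O(n,e_n)$-invariance of $U$, and the identity $S_1((T^j_i)^{-1}K,\omega)=S_1(K,T^j_i\omega)$, each $(T^j_i)^{-1}K$ satisfies the assumptions of Theorem~\ref{thm-main} on $U$; iterating Lemma~\ref{l-2-Newton} shows the same for $L_j$. In particular, $r^1_{L_j}=\dots=r^{n-1}_{L_j}$ a.e.\ on $U$, and (\ref{eq:area-measure-density}) gives $f^{(1)}_{L_j}=M(f^{(1)}_{K,U};T^j_1,\dots,T^j_{m_j})$ and $f^{(n-1)}_{L_j}=(f^{(1)}_{L_j})^{n-1}$ a.e.\ on $U$.

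For absolute continuity of $S_1(MK,\cdot)|_{\Bo(U)}$, test against a continuous $\varphi$ with support in $U$. Weak convergence of $S_1(L_{j_\ell},\cdot)$ combined with Cauchy--Schwarz (using $\varphi\in L^2$ and the $L^2$-convergence of $M(f^{(1)}_{K,U};T^{j_\ell})$) yields
\[
\int_\Sn\varphi\,dS_1(MK,\cdot)=\lim_\ell\int_\Sn\varphi\,M(f^{(1)}_{K,U};T^{j_\ell}_1,\dots,T^{j_\ell}_{m_{j_\ell}})\,d\Ha=\int_U\varphi\,Sr(f^{(1)}_{K,U})\,d\Ha,
\]
so $S_1(MK,\cdot)|_{\Bo(U)}=Sr(f^{(1)}_{K,U})\,d\Ha|_{\Bo(U)}$ is absolutely continuous and $f^{(1)}_{MK}=Sr(f^{(1)}_{K,U})$ a.e.\ on $U$.

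For the equality of principal radii of $MK$ on $U$, pass to a further subsequence along which $M(f^{(1)}_{K,U};T^{j_\ell}_1,\dots,T^{j_\ell}_{m_{j_\ell}})\to Sr(f^{(1)}_{K,U})$ pointwise a.e. For $\varphi\geq 0$ continuous with support in $U$, the relation $f^{(n-1)}_{L_{j_\ell}}=(f^{(1)}_{L_{j_\ell}})^{n-1}$ on $U$, together with weak convergence of $S_{n-1}(L_{j_\ell},\cdot)$ and Fatou's lemma, delivers
\[
\int_\Sn\varphi\,dS_{n-1}(MK,\cdot)\geq\liminf_\ell\int_U\varphi\bigl(M(f^{(1)}_{K,U};T^{j_\ell}_1,\dots,T^{j_\ell}_{m_{j_\ell}})\bigr)^{n-1}d\Ha\geq\int_U\varphi\,(f^{(1)}_{MK})^{n-1}\,d\Ha.
\]
Applying Lemma~\ref{l-mt} to $\nu_1=(f^{(1)}_{MK})^{n-1}\,d\Ha|_{\Bo(U)}$ and $\nu_2=S_{n-1}(MK,\cdot)|_{\Bo(U)}$ yields $(f^{(1)}_{MK})^{n-1}\leq f^{(n-1)}_{MK}$ $\Ha$-a.e.\ on $U$, while Newton's inequality (\ref{eq:Newton}) supplies the reverse inequality; equality then holds, and the equality case of (\ref{eq:Newton}) forces $r^1_{MK}(u)=\dots=r^{n-1}_{MK}(u)$ for a.e.\ $u\in U$. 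The main obstacle is precisely this last step: Hessians do not pass to weak limits of support functions, so one cannot directly transfer the pointwise identity $Hess(h_{L_j})=\rho_j I$ to $MK$; the detour through the area-measure densities $f^{(1)}$ and $f^{(n-1)}$, where Jensen/Fatou is applied only in the direction that combines cleanly with Newton's inequality, is what makes the limiting argument go through.
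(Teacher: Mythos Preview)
Your proof is correct and follows essentially the same route as the paper's: realize $MK$ as a Hausdorff limit of Minkowski averages of rotates of $K$, use Proposition~\ref{prop-conv-sr} to control both $h_K$ and $f^{(1)}_{K,U}$ simultaneously, pass the absolute continuity of $S_1$ to the limit via weak convergence and $L^2$-convergence of the densities, and then combine Fatou with Newton's inequality to recover (\ref{eq:equal-princ-radii}) for $MK$. The only difference is that the paper runs the $L^2$-bound and the Fatou/Newton step through the index $j=2$ (using $f^{(1)}_{K,U}=(f^{(2)}_{K,U})^{1/2}$ and $S_2$), whereas you use $j=n-1$ (via $f^{(1)}_{K,U}=(f^{(n-1)}_{K,U})^{1/(n-1)}$ and $S_{n-1}$); both choices work for exactly the same reason, and yours even gives the slightly stronger integrability $f^{(1)}_{K,U}\in L^{n-1}$. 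Note only that Lemma~\ref{l-1-Newton} as stated in the paper restricts to $j<n-1$, so in your write-up you are right to appeal to Newton's inequality (\ref{eq:Newton}) and its equality case directly rather than to that lemma.
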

\begin{proof}
Without loss of generality we may assume that $K$ contains the origin in its interior. Therefore, there exist Euclidean balls $B_1,B_2$, centered at the origin, such that $B_1\subseteq K\subseteq B_2$. Moreover, by assumption and by Lemma \ref{l-1-Newton}, we have $f^{(1)}_{K,U}=\left(f^{(2)}_{K,U}\right)^{1/2}$, almost everywhere in $U$. Since $f^{(2)}_{K,U}\in L^1$, it follows that $f^{(1)}_{K,U}\in L^2$. Moreover, by Proposition \ref{prop-conv-sr}, for $k=2$, there exists a sequence $T_1^1,\dots,T_{m_1}^1,T_1^2,\dots,T^1_{m_2},\dots\in O(n,e_n)$, such that 
$$h_j:=M(h_K;T_1^j,\dots,T_{m_j}^j)\xrightarrow{j\to\infty}Sr(h_K)$$
and
$$M(f^{(1)}_{K,U};T_1^j,\dots,T_{m_j}^j)\xrightarrow{j\to\infty}Sr(f^{(1)}_{K,U})$$
in $L^2$ and (by taking subsequences) almost everywhere. Since $h_j=(1/m_j)(h_{(T_1^j)^\ast}+\dots+h_{(T_{m_j}^j)^\ast})$, $h_j$ is also a support function of some convex body $K_j$, where $B_1\subseteq K_j\subseteq B_2$, $j=1,2,\dots$. Thus, by the Blaschke Selection theorem, by taking a subsequence of $\{K_{_j}\}$ if necessary, we may assume that $\{K_{_j}\}$ converges to some convex body $\overline{MK}$ in the Hausdorff metric. Then, $h_{K_j}\to h_{\overline{MK}}$ (uniformly in $\Sn$), which shows that $h_{\overline{MK}}=h_{Sr(h_K)}$ and $\overline{MK}=MK$. Next, notice that 
$$f^{(1)}_{K_j,U}=\frac{f^{(1)}_{(T^j_1)^\ast K,U}+\dots+f^{(1)}_{(T^j_{m_j})^\ast K,U}}{m_j}=M(f^{(1)}_{K,U};T_1^j,\dots,T_{m_j}^j),$$ which converges in $L^2$ and thus weakly to $Sr(f^{(1)}_{K,U})$. This, in particular, shows that $S_1(MK,\cdot)|_{\Bo(U)}$ is absolutely continuous and that $f^{(1)}_{MK,U}=Sr(f^{(1)}_{K,U})$. Moreover, using Lemma \ref{l-2-Newton}, we see that $f^{(1)}_{K_j,U}=\left(f^{(2)}_{K_j,U}\right)^{1/2}$, almost everywhere in $U$, thus $f^{(2)}_{K_j,U}$ converges to $Sr(f^{(1)}_{K,U})^2$, almost everywhere in $U$. Let $\varphi:\Sn\to\R$ be any continuous non-negative function, supported inside $U$. Then, by Fatou's lemma and by the fact that $S_2(K_j,\cdot)$ converges weakly to $S_2(MK,\cdot)$, we get
\begin{eqnarray*}
\int_{\Sn}\left(f^{(1)}_{MK,U}\right)^2\varphi d\Ha&=&\int_{\Sn}\left(Sr(f^{(1)}_{K,U})\right)^2\varphi d\Ha\leq \liminf_{j\to\infty}\int_{\Sn}f^{(2)}_{K_j,U}\varphi d\Ha\\
&\leq & \liminf_{j\to\infty}\int_{\Sn}\varphi dS_2(K_j,\cdot)=\int_{\Sn}\varphi dS_2(MK,\cdot).
\end{eqnarray*}
Since $\varphi$ is arbitrary, we conclude by Lemma \ref{l-mt} $(i)$ that $\left(f^{(1)}_{MK,U}\right)^2d\Ha|_{\Bo(U)}\leq S_2(MK,\cdot)|_{\Bo(U)}$, which by Lemma \ref{l-mt} $(ii)$ gives $\left(f^{(1)}_{MK,U}\right)^2\leq f^{(2)}_{MK,U}$, almost everywhere in $U$. Thus, using Lemma \ref{l-1-Newton}, we see that 
for almost every direction $u$ in $U$, (\ref{eq:equal-princ-radii}) holds for $MK$ at $u$, concluding our proof.
\end{proof}
\begin{proposition}\label{prop-bodies-of-rev}
Let $K_1,\dots,K_{n-1}$ be convex bodies of revolution with respect to the axis $\R e_n$ and let $U=\{x\in \Sn:x_1>a\}$, for some $0<a<1$. For $i=1,\dots,n-1$, consider the Borel measure $\mu_i$ on the sphere, given by
\begin{equation*}\label{eq-mu_i-def}
\mu_i(\omega)=S_{n-1}(K_i,\omega\cap U)+S_{n-1}(K_i,(-\omega)\cap U).
\end{equation*} 
If none of the $K_1,\dots,K_{n-1}$ is a cylinder, then there are uniquely determined symmetric convex bodies $K_1^U,\dots,K_{n-1}^U$ of revolution with respect the the axis $\R e_n$, whose surface area measure equals $\mu_1,\dots,\mu_{n-1}$, respectively and
\begin{equation}\label{eq-mixed-area-bodies-of-rev}
S(K_1^U,\dots,K_{n-1}^U,\omega)=S(K_1,\dots,K_{n-1},\omega\cap U)+S(K_1,\dots,K_{n-1},(-\omega)\cap U), 
\end{equation}
for all $\omega\in\Bo(\Sn)$.
\end{proposition}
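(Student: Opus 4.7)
The plan is to first establish existence and uniqueness of each $K_i^U$ via Minkowski's theorem, then derive the mixed area identity \eqref{eq-mixed-area-bodies-of-rev} via a top cap translation lemma and polynomial expansion. For existence, I check Minkowski's hypotheses on $\mu_i$: it is a finite Borel measure, its barycenter is zero by the antipodal symmetry $\mu_i(\omega)=\mu_i(-\omega)$ built into the definition, and it is not concentrated on any great subsphere. The last condition uses the non-cylinder hypothesis: since $\mu_i$ is $O(n,e_n)$-invariant, any supporting great subsphere would itself have to be rotationally invariant, hence either $\Sn\cap e_n^\perp$ (impossible since $\mu_i$ is supported in $\{|u_n|>a\}$) or $\{\pm e_n\}$ (which forces $S_{n-1}(K_i,U\setminus\{e_n\})=0$, making $K_i$ a cylinder). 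Minkowski's theorem then produces a convex body, unique up to translation, with surface area measure $\mu_i$; the antipodal symmetry and $O(n,e_n)$-invariance of $\mu_i$ single out the translation making it origin-symmetric and of revolution about $\R e_n$, giving the unique $K_i^U$.

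The key step for the second part is the \emph{top cap lemma}: for each $i$ there exists $t_i\in\R$ such that $h_{K_i^U}(u)=h_{K_i}(u)+t_i u_n$ for every $u\in U$. Indeed, $K_i$ and $K_i^U$ are both bodies of revolution about $\R e_n$, and by construction their surface area measures coincide on $U$; reducing to the meridian section via rotational symmetry turns the comparison into a planar local Minkowski problem whose 2-dimensional translation ambiguity is forced by rotational invariance to reduce to a vertical translation $t_i e_n$, giving the asserted linear discrepancy. Granting this, for any $\lambda_1,\dots,\lambda_{n-1}>0$, Minkowski-additivity of support functions yields
\[
h_{\sum_i\lambda_i K_i^U}(u)-h_{\sum_i\lambda_i K_i}(u)=\Bigl(\sum_i\lambda_i t_i\Bigr)u_n,\qquad u\in U,
\]
so $\sum_i\lambda_i K_i^U$ and $\sum_i\lambda_i K_i$ have equal surface area measures on $U$. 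Expanding both sides as polynomials in $(\lambda_i)$ via \eqref{eq:mixedarea-additivity} and matching the coefficient of $\lambda_{i_1}\cdots\lambda_{i_{n-1}}$ proves $S(K_{i_1}^U,\dots,K_{i_{n-1}}^U,\omega)=S(K_{i_1},\dots,K_{i_{n-1}},\omega)$ for every Borel $\omega\subseteq U$. The origin-symmetry of each $K_i^U$ then transports this identity to $\omega\subseteq -U$ via the antipodal map, covering \eqref{eq-mixed-area-bodies-of-rev} on $U\cup(-U)$.

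It remains to verify that both sides of \eqref{eq-mixed-area-bodies-of-rev} vanish on $(U\cup(-U))^c$. The right-hand side is zero there by definition. For the left-hand side, each Minkowski combination $L=\sum_i\lambda_i K_i^U$ is an origin-symmetric body of revolution whose boundary splits into a top cap (a translate of the top cap of $\sum_i\lambda_i K_i$), its antipodal reflection, and a connecting ridge which, by the combined origin and rotational symmetries, lies at $x_n=0$ and is an $(n-2)$-dimensional, hence $\Ha$-null, subset of $\partial L$. Thus $S_{n-1}(L,\cdot)$ is supported in $U\cup(-U)$, and the polynomial coefficient argument in $(\lambda_i)$ forces each mixed area measure $S(K_{i_1}^U,\dots,K_{i_{n-1}}^U,\cdot)$ to share this support, matching the vanishing of the right-hand side there. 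The main obstacle I anticipate is the top cap lemma: although reducing to a one-dimensional uniqueness in the meridian plane is natural, making it rigorous for arbitrary (possibly non-smooth) convex bodies requires carefully handling singular parts of $\partial K_i^U$ and the weak measure-theoretic boundary data.
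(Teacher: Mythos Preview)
Your outline is correct and overlaps substantially with the paper's, but the execution differs in two places. First, for the ``top cap lemma'' you propose a local Minkowski-uniqueness argument in the meridian plane; the paper instead \emph{constructs} $K_i^U$ explicitly by revolving the shifted profile $\tilde\varphi_i:=\varphi_i-\varphi_i(d_i)$ (where revolving the graph of $\varphi_i$ gives $\tau(K_i,U)$) and simply reads off from \eqref{eq:surface-area-measure} that this body has surface area measure $\mu_i$, so the identity $h_{K_i^U}=h_{K_i}-\varphi_i(d_i)\langle e_n,\cdot\rangle$ on $U$ is immediate---this dissolves exactly the obstacle you anticipated. Second, for the mixed area identity on $U$ the paper does not use your polynomial expansion; it first handles $K_i\in\mathcal{C}^2_+$ via the mixed discriminant formula \eqref{eq:mixed-discriminant} (the density is locally determined by the Hessians, hence unchanged when each $\varphi_i$ is shifted by a constant) and then passes to general $K_i$ by approximation and weak convergence of mixed area measures. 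Your polynomial route---Minkowski sums inherit the top cap identity, hence their surface area measures agree on $U$, hence all mixed coefficients agree---is a valid and arguably cleaner shortcut that avoids approximation entirely; the implicit step (equal support functions on an open $U$ implies equal $S_{n-1}$ on $U$) follows since $F(K,u)=\partial h_K(u)$ is locally determined. The treatment of $(U\cup -U)^c$ is essentially the same in both proofs, though the paper's explicit profile description makes the ridge location and its $(n-2)$-dimensionality rigorous via $F(K_i^U,u)\subseteq d_i\Sn\cap e_n^\perp$ rather than by a bare appeal to symmetry.
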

\begin{proof}
Let $i\in\{1,\dots,n-1\}$. Since $K_i$ is not a cylinder, it is clear that $\mu_i$ is not concentrated on any great subsphere of $\Sn$. Thus, by the Minkowski Existence and Uniqueness theorem, there exists a unique symmetric body of revolution (since $\mu_i$ is even and rotationally symmetric) $K_i^U$ with respect to the $x_n$-axis, whose surface area measure equals $\mu_i$. There is a simple geometric description of $K_i^U$: Since $U$ is contained in the hemisphere $\Sn\cap\{x_n>0\}$, there is a continuous, concave, non-increasing function $\varphi_i:[0,d_ie_{n-1}]\to\R$, for some $d_i>0$, such that the surface of revolution $\tau(K_i,U)$ is obtained by revolving the graph of $\varphi_i|_{[0,d_ie_{n-1})}$ about the $x_n$-axis. It follows easily by (\ref{eq:surface-area-measure}) that  $(\textnormal{bd}K_i^U)\cap \{x_n\geq 0\}$ is obtained by rotating the graph of the function $\widetilde{\varphi_i}:=\varphi_i-\varphi_i(d_i)$ about the $x_n$-axis. In the case that $K_1,\dots,K_{n-1}\in {\cal C}_+^2$, $S(K_1,\dots,K_{n-1},\cdot)|_{\Bo(U)}$ has density given by (\ref{eq:mixed-discriminant}) and since $h_{K_i}$ at any point in $U$ depends only on the function $\varphi_i$, $i=1,\dots,n-1$, it follows that $S(K_1^U,\dots,K_{n-1}^U,\cdot)|_{\Bo(U)}$ also has density; the same as the density of $S(K_1,\dots,K_{n-1},\cdot)|_{\Bo(U)}$. 
In the general case, one can approximate $K_1,\dots,K_{n-1}$ by sequences of ${\cal C}_+^2$ bodies of revolution. Since the corresponding sequence of mixed area measures converges weakly to $S(K_1,\dots,K_{n-1},\cdot)$, we conclude that for any continuous function $\phi:\Sn\to\R$, supported inside $U$, we have
$$\int_U\phi dS(K_1,\dots,K_{n-1},\cdot)=\int_U\phi dS(K_1^U,\dots,K_{n-1}^U,\cdot).$$
Hence, by Lemma \ref{l-mt} $(i)$, it follows that $S(K_1^U,\dots,K_{n-1}^U,\omega)=S(K_1,\dots,K_{n-1},\omega)$, for any $\omega\in\Bo(U)$.  The fact that (\ref{eq-mixed-area-bodies-of-rev}) holds for all $\omega\in \Bo(U\cup -U)$ follows trivially by symmetry.

It remains to prove that $S(K_1^U,\dots,K_{n-1}^U,\Sn\setminus (U\cup-U))=0$. Notice that for any $u\in (\Sn\setminus U)\cap\textnormal{span}\{e_{n-1},e_n\}\cap\{x_n\geq 0\}$, the intersection of the supporting line to the graph of $\widetilde{\varphi_i}$, whose outer unit normal vector is $u$, with  the graph of $\widetilde{\varphi_i}$, contains only the point $d_ie_{n-1}$, $i=1,\dots,n-1$. Hence, by the rotational symmetry and central symmetry of $K_i^U$, we conclude that for any $u\in \Sn\setminus(U\cup-U)$, it holds $F(K_i^U,u)\subseteq d_iS^{n-1}\cap e_n^\perp$, $i=1,\dots,n-1$. The additivity of the support set functional (\ref{eq:support-set}) gives $F(K_1^U+\dots+K_{n-1}^U,u)\subseteq (d_1+\dots+d_{n-1})S^{n-1}\cap e_n^\perp$.
In other words, $\tau(K_1^U+\dots K_{n-1}^U,\Sn\setminus (U\cup-U))=(d_1+\dots+d_{n-1})S^{n-1}\cap e_n^\perp$, which by (\ref{eq:surface-area-measure}) gives $S_{n-1}(K_1^U+\dots+K_{n-1}^U,\Sn\setminus(U\cup-U))=0$. It follows immediately by (\ref{eq:mixedarea-additivity}) that $S(K_1^U,\dots,K_{n-1}^U,\Sn\setminus (U\cup-U))=0$, as asserted.
\end{proof}
\subsection{Regularity}
\begin{lemma}\label{l-umb-bodies-of-revolution}
Let $K$ be a convex body in $\Rn$ and $U$ be a spherical cap, centered in $e_n$. If $K$ and $U$ satisfy the assumptions of Theorem \ref{thm-main}, then $Sr(f^{(1)}_{K,U})$ equals to a constant, almost everywhere in $U$.
\end{lemma}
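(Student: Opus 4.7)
The plan is to invoke Lemma \ref{l-MK} to reduce to a convex body of revolution, and then close the argument by combining the pointwise consequence of the equal--radii hypothesis with the distributional form of (\ref{eq:distributions}). By Lemma \ref{l-MK}, the body $MK$ (whose support function is $Sr(h_K)$) is a convex body of revolution about $\mathbb{R}e_n$ that still satisfies the hypotheses of Theorem \ref{thm-main} on $U$, and we have the key identification $Sr(f^{(1)}_{K,U})=f^{(1)}_{MK,U}$. Rotational symmetry lets us write $h_{MK}(u)=\phi(u_n)$ for a continuous $\phi:[-1,1]\to\mathbb{R}$ and $f^{(1)}_{MK,U}(u)=\psi(u_n)$ for a measurable $\psi$ on $(a,1)$; since $h_{MK}|_{\mathbb{S}^{n-1}}$ is Lipschitz, $\phi$ is locally Lipschitz on $(-1,1)$. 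It suffices to show that $\psi$ is constant on $(a,1)$.

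At every $u\in U$ where $h_{MK}$ is twice Alexandrov differentiable, rotational invariance of $h_{MK}$ under the stabilizer of $u$ in $O(n,e_n)$ forces the Hessian of $h_{MK}$ restricted to $u^\perp$ to decompose into a $1\times 1$ meridian block and an $(n-2)$--dimensional parallel block. A direct Taylor expansion of $h_{MK}(u+s\varepsilon)$ at $s=0$ yields the eigenvalues $r_p(t)=\phi(t)-t\phi'(t)$ (with multiplicity $n-2$) along directions $\varepsilon\in u^\perp\cap e_n^\perp$ and $r_m(t)=\phi(t)-t\phi'(t)+(1-t^2)\phi''(t)$ along the meridian direction $\varepsilon_1=(e_n-tu)/\sqrt{1-t^2}$. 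The equal--radii hypothesis thus reads $r_m=r_p$ a.e.\ in $U$, and (\ref{eq:area-measure-density}) yields the pointwise identity
\[\psi(t)=\phi(t)-t\phi'(t)\qquad\text{for a.e.\ }t\in(a,1).\]

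Now use (\ref{eq:distributions}): on $U$ we have $(n-1)f^{(1)}_{MK}=\Delta_S h_{MK}+(n-1)h_{MK}$ distributionally. The standard formula $\Delta_S[\phi(u_n)]=(1-t^2)\phi''(t)-(n-1)t\phi'(t)$ for the spherical Laplacian of a function of $u_n$, combined with testing against rotationally symmetric test functions supported in $U$, transfers the identity to the distributional equation
\[(n-1)\psi(t)=(n-1)\phi(t)+(1-t^2)\phi''(t)-(n-1)t\phi'(t)\qquad\text{on }(a,1).\]
Because $\phi$ is locally Lipschitz on $(a,1)$, its distributional first derivative equals its a.e.\ classical derivative, so the displayed pointwise identity also holds as a distributional equation; substituting it cancels all terms except $(1-t^2)\phi''(t)=0$. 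Since $1-t^2$ is smooth and strictly positive on $(a,1)$, division yields $\phi''=0$ as a distribution, so $\phi$ is affine on $(a,1)$ and $\psi(t)=\phi(t)-t\phi'(t)$ is constant, as required.

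The delicate point is the rigorous passage from the pointwise a.e.\ identity $\psi=\phi-t\phi'$ (obtained from the Alexandrov analysis at regular points) to the distributional equation produced by (\ref{eq:distributions}); the local Lipschitz regularity of $\phi$ is what allows one to identify distributional and classical derivatives, thereby forcing any singular part of the distributional $\phi''$ to vanish upon cancellation.
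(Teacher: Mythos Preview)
Your argument is correct and takes a genuinely different route from the paper's. Both proofs begin with Lemma~\ref{l-MK} to pass to the body of revolution $MK$ with $Sr(f^{(1)}_{K,U})=f^{(1)}_{MK,U}$, but then diverge. The paper proceeds via a convex--geometric inequality: Proposition~\ref{prop-bodies-of-rev} constructs auxiliary bodies $(MK)^U$ and $(B_2^n)^U$ that localize area measures to $U$, and then the Aleksandrov--Fenchel inequality (\ref{eq:af}) yields
\[
\int_U f^{(1)}_{MK}\,d\Ha \;\ge\; \big(\Ha(U)\big)^{1/2}\Big(\int_U \big(f^{(1)}_{MK}\big)^2\,d\Ha\Big)^{1/2},
\]
while Cauchy--Schwarz gives the reverse; equality in Cauchy--Schwarz forces $f^{(1)}_{MK}$ to be constant on $U$. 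Your approach is instead a direct ODE analysis of a surface of revolution: the explicit eigenvalues $r_p(t)=\phi(t)-t\phi'(t)$ and $r_m(t)=r_p(t)+(1-t^2)\phi''(t)$, the equal--radii hypothesis giving $\psi=r_p$ a.e., and the distributional form of (\ref{eq:distributions}) then cancel to leave $(1-t^2)\phi''=0$ in the sense of distributions on $(a,1)$, whence $\phi$ is affine. The point you single out as delicate---that Lipschitz regularity of $\phi$ identifies the a.e.\ first derivative with the distributional one, so that only the \emph{distributional} $\phi''$ (not the Alexandrov pointwise one) survives after subtraction---is exactly the crux, and it is handled correctly; this is what rules out Cantor--staircase behaviour. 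Your route bypasses both Proposition~\ref{prop-bodies-of-rev} and the Aleksandrov--Fenchel inequality at the price of an explicit coordinate computation; the paper's route is coordinate--free and stays within mixed--volume machinery. One minor remark: when you transfer the spherical identity to a one--dimensional statement by testing against rotationally symmetric functions, the Jacobian weight $(1-t^2)^{(n-3)/2}$ appears, but since $(1-t^2)\cdot(1-t^2)^{(n-3)/2}=(1-t^2)^{(n-1)/2}$ and its derivative is $-(n-1)t(1-t^2)^{(n-3)/2}$, the weighted identity still reduces to exactly the equation you wrote.
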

\begin{proof} Recall that by Lemma \ref{l-MK}, it holds $Sr(f_{K,U}^{(1)})=f^{(1)}_{MK,U}=\left(f^{(2)}_{MK,U}\right)^{1/2}$, almost everywhere in $U$. Also, by Proposition \ref{prop-bodies-of-rev}, (\ref{eq:mixarea}) and the Alesandrov-Fenchel inequality (\ref{eq:af}), we have
\begin{eqnarray*}
\frac{1}{n}\int_Uf^{(1)}_{MK}d\Ha&=&\frac{1}{n}\int_UdS(MK,B_2^n[n-2],\cdot)=\frac{1}{n}\int_UdS((MK)^U,(B_2^n)^U[n-2],\cdot)\\
&=&\frac{1}{2}V(B_2^n,(MK)^U,(B_2^n)^U[n-2])\\
&\geq& \frac{1}{2}\left(V(B_2^n,(MK)^U[2],(B_2^n)^U[n-3])V(B_2^n,(B_2^n)^U[n-1])\right)^{1/2}\\
&=& \left(\frac{1}{n}\int_UdS((MK)^U[2],(B_2^n)^U[n-3],\cdot)\frac{1}{n}\int_UdS((B_2^n)^U[n-1],\cdot)\right)^{1/2}\\
&\geq& \frac{1}{n}\left(\Ha(U)\right)^{1/2}\left(\int_Uf^{(2)}_{MK}d\Ha\right)^{1/2}\\
&=&\frac{1}{n}\left(\Ha(U)\right)^{1/2}\left(\int_U\left(f^{(1)}_{MK}\right)^2d\Ha\right)^{1/2}
\end{eqnarray*}
On the other hand, the Cauchy-Schwartz inequality gives
\begin{equation}\label{eq-CS-last}
\int_Uf_{MK}^{(1)}d\Ha\leq \left(\Ha(U)\right)^{1/2}\left(\int_U\left(f^{(1)}_{MK}\right)^2d\Ha\right)^{1/2}.
\end{equation}
Therefore, there must be equality in the Cauchy-Schwartz inequality (\ref{eq-CS-last}), which is only possible if $f^{(1)}_{MK}$ is equal to a constant almost everywhere in $U$, proving our claim.
\end{proof}
\textit{}\\
Proof of Theorem \ref{thm-main}.\\
Let $K$, $U$ be as in the statement of Theorem \ref{thm-main}.
Without loss of generality, we may assume that $U$ is a spherical cap centered at $e_n$. 
By Lemma \ref{l-umb-bodies-of-revolution}, $Sr(f_{K,U}^{(1)})=f^{(1)}_{MK,U}$ can be taken to be equal to a constant $c\geq 0$ in $U$. For $u\in U$, define the following quantity (if it exists)
$$F(u):=\lim_{\textnormal{diam}(U')\to 0}\frac{\int_{U'}f^{(1)}_{K}d\Ha}{\Ha(U')},$$
where $U'$ runs over all spherical caps $U'\subseteq U$, whose center is $u$. 
First assume that $u=e_n$ and let $U'\subseteq U$ be a spherical cap centered at $e_n$. Notice, also, that $Sr(f^{(1)}_{K,U'})|_{U'}=Sr(f^{(1)}_{K,U})|_{U'}$. Then, by Lemma \ref{l-sr-l2}, it follows that $\int_{U'}f^{(1)}_{K}d\Ha=\int_{U'}Sr(f^{(1)}_{K,U})d\Ha=c\Ha(U')$. In particular, $F(e_n)$ exists and equals to $c$. Moreover, notice that if $e_n$ is a Lebesgue point of $f^{(1)}_{K}$, then
$F(e_n)=f^{(1)}_{K}(e_n)$. Next, take any spherical cap $V$ inside $U$, centered at some $v\in U$. Since the pair $(K,V)$ also satisfies the assumptions of Theorem \ref{thm-main} and since $e_n$ can clearly be replaced by any other point on the sphere, our previous discussion shows that $F(v)$ exists and
\begin{equation}\label{eq-mvp}
\frac{\int_Vf^{(1)}_{K}d\Ha}{\Ha(V)}=F(v),
\end{equation}
while $F(v)$ equals $f^{(1)}_{K}(v)$ if $v$ is a Lebesgue point of $f^{(1)}_{K}$. In particular, the function $F:U\to\R$ is well defined in $U$. Notice, however, that since almost every $v\in U$ is a Lebesgue point of $f^{(1)}_{K}$, $F$ equals $f^{(1)}_{K}$ almost everywhere in $U$. Thus, by (\ref{eq-mvp}), it holds
$$\frac{\int_VFd\Ha}{\Ha(V)}=F(v),$$
for all $v\in U$ and for all spherical caps $V\subseteq U$, centered at $v$. Thus, $F$ has the so-called mean value property, which on the sphere (just like in the Euclidean case) implies that $F$ is harmonic \cite{Wil}. It follows using e.g. \cite[Proposition 1.6]{Ta}, that $F$ is $C^\infty$-smooth (actually real analytic). Consequently, $f^{(1)}_{K}$ is almost everywhere equal to a $C^\infty$ function in $U$. Since (\ref{eq:distributions}) holds in the sense of distributions in $U$, it follows again by \cite[Proposition 1.6]{Ta} that $h_K$ is of class $C^\infty$ on $U$. Next, notice that, by Lemma \ref{l-2-Newton}, the pair $(K+B_2^n,U)$ also satisfies the assumptions of Theorem \ref{thm-main}. Since $f^{(1)}_{K+B_2^n}\geq 1>0$ in $U$, it follows that all principal radii of curvature of $K+B_2^n$ are strictly positive, thus (since $h_{K+B_2^n}$ is smooth) as in \cite[pp 120]{Sc} we conclude that $\tau(K+B_2^n,U)$ is smooth as a manifold. This, together with (\ref{eq:rad-curv}) and Theorem \ref{thm-old}, shows that $\tau(K+B_2^n)$ is contained in a Euclidean sphere. Therefore, and since $\tau(K+B_2^n,U)$ is open in $\textnormal{bd}K$, we conclude that $h_{K+B_2^n}+\langle a,\cdot\rangle $ is constant on $U$, for some fixed vector $a$, and hence $h_K+\langle a,\cdot\rangle $ is constant on $U$, ending the proof of Theorem \ref{thm-main}. $\square$
\section{Even functions with isotropic sections}

A {\it zonoid} $Z$ is a convex body whose support function is the cosine transform of some (positive) Borel measure $\mu$ on $\Sn$. The measure $\mu$ is called the {\it generating measure} of $Z$.
 
Let $Z_1,\dots,Z_{n-1}$ be zonoids in $\Rn$ with corresponding generating measures $\mu_1,\dots,\mu_{n-1}$. If $\mu_1,\dots, \mu_{n-1}$ are absolutely continuous with corresponding densities $g_1,\dots,g_{n-1}$, then there is an integral-geometric formula, essentially due to W. Weil \cite{W} (see also  \cite[Section 5.3]{Sc}) that gives the density of the mixed area measure $S(Z_1,\dots,Z_{n-1},\cdot)$.
\begin{flalign}
& \  \ \  \ \frac{dS(Z_1,\dots,Z_{n-1},\cdot)}{d\Ha(\cdot)}(u)&\nonumber
\end{flalign}
\begin{equation}\label{eq-Weil-1}
=\frac{2^{n-1}}{(n-1)!}\int_{\Sn\cap u^\perp}\dots\int_{\Sn\cap u^\perp}\textnormal{det}(x_1,\dots,x_{n-1})^2g_1(x_1)\dots g_{n-1}(x_{n-1})d\Ham(x_1)\dots d\Ham(x_{n-1}).
\end{equation}  
In the particular case that $Z_1=\dots=Z_k=Z$, $g_1=\dots=g_k=g$, $Z_{k+1}=\dots=Z_{n-1}=B_2^n$, $k=1,\dots, n-1$, we have
$$h_{Z_i}(u)=a_n\int_{\Sn}|\langle x,u\rangle|d\Ha(x),\qquad\textnormal{where}\qquad a_n=\left(\int_{\Sn}|x_1|d\Ha(x)\right)^{-1},$$
$i=j+1,\dots,n-1$. Hence, (\ref{eq-Weil-1}) becomes $ f^{(j)}_Z(u)=$
\begin{equation}\label{eq-Weil-2}
\frac{a_n^{n-j-1}2^{n-1}}{(n-1)!}\int_{\Sn\cap u^\perp}\dots\int_{\Sn\cap u^\perp}\textnormal{det}(x_1,\dots,x_{n-1})^2g(x_1)\dots g(x_{j})d\Ham(x_1)\dots d\Ham(x_{n-1}).
\end{equation}  
In particular, area measures of any order of the zonoid $Z$ are absolutely continuous, if the generating measure of $Z$ is absolutely continuous. Notice, also that (\ref{eq-Weil-2}) implies that
\begin{equation}\label{eq-Weil-3}
f^{(1)}_Z(u)=b_n\int_{\Sn\cap u^\perp}g(x)d\Ham(x)=b_n{\cal R}(g),
\end{equation}
where $b_n>0$ is a constant that depends only on the dimension. 
\begin{lemma}\label{l-last-1}
Let $n\geq 3$, $U$ be an open set in  $\Sn$ and $g:\Sn\to\R_+$ be a bounded measurable function. Denote by $Z(g)$ the zonoid with generating measure $gd\Ha(\cdot)$.
The following are equivalent.
\begin{enumerate}[i)] 
\item The restriction $g|_{\Sn\cap u^\perp}$ is isotropic for almost every $u\in U$.
\item For almost every $u\in U$, it holds 
\begin{equation}\label{eq-last-s1-s2}
\left(f^{(1)}_{Z(g)}(u)\right)^2=f^{(2)}_{Z(g)}(u).
\end{equation}
\end{enumerate} 
\end{lemma}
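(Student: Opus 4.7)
The plan is to use Weil's integral-geometric formulas (\ref{eq-Weil-2}) and (\ref{eq-Weil-3}) to reduce (ii) to a trace identity for the second-moment operator of $g$ on each great subsphere, which then matches (i) via the equality case of Cauchy--Schwarz. For each $u\in U$, introduce the positive semidefinite operator $M_u$ on $u^\perp$ given by $M_u=\int_{\Sn\cap u^\perp}x\otimes x\cdot g(x)\,d\Ham(x)$, which satisfies $\operatorname{tr}(M_u)=\int_{\Sn\cap u^\perp}g\,d\Ham$. By the definition of isotropy, (i) is equivalent to $M_u$ being a scalar multiple of the identity on $u^\perp$ for almost every $u\in U$; since the trace is fixed, this happens if and only if $\operatorname{tr}(M_u^2)=\tfrac{1}{n-1}(\operatorname{tr} M_u)^2$, i.e.\ the Cauchy--Schwarz equality case for the eigenvalues of $M_u$.

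From (\ref{eq-Weil-3}) one already has $f^{(1)}_{Z(g)}(u)=b_n\operatorname{tr}(M_u)$. For $f^{(2)}_{Z(g)}(u)$ I would apply (\ref{eq-Weil-2}) with $j=2$ and integrate out the $n-3$ unweighted arguments $x_3,\ldots,x_{n-1}$. The key point is that writing the integrand as a Gram determinant $\det(\langle x_i,x_j\rangle)_{i,j=1}^{n-1}$ and iteratively factoring $\det G(x_1,\ldots,x_k)=|x_k^\perp|^2\det G(x_1,\ldots,x_{k-1})$, where $x_k^\perp$ is the component of $x_k$ orthogonal to $\operatorname{span}(x_1,\ldots,x_{k-1})$, lets one integrate one variable at a time via the rotation-invariance identity $\int_{\Sn\cap u^\perp}|x^\perp|^2\,d\Ham(x)=\tfrac{m}{n-1}\Ham(\Sn\cap u^\perp)$ (when the ambient perpendicular complement has dimension $m$). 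Iterating from $k=n-1$ down to $k=3$ yields
$$f^{(2)}_{Z(g)}(u)=A\int_{\Sn\cap u^\perp}\int_{\Sn\cap u^\perp}\bigl(1-\langle x_1,x_2\rangle^2\bigr)g(x_1)g(x_2)\,d\Ham(x_1)\,d\Ham(x_2)=A\bigl[(\operatorname{tr} M_u)^2-\operatorname{tr}(M_u^2)\bigr]$$
for some constant $A=A(n)>0$, using that $\int\!\!\int\langle x_1,x_2\rangle^2 g(x_1)g(x_2)\,d\Ham(x_1)d\Ham(x_2)=\operatorname{tr}(M_u^2)$.

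Combining, (ii) becomes $b_n^2(\operatorname{tr} M_u)^2=A[(\operatorname{tr} M_u)^2-\operatorname{tr}(M_u^2)]$, equivalently $\operatorname{tr}(M_u^2)=(1-b_n^2/A)(\operatorname{tr} M_u)^2$, which matches the isotropy characterization precisely when $b_n^2/A=(n-2)/(n-1)$. Rather than computing $A$ and $b_n$ explicitly, I pin down this constant identity by testing on $g\equiv 1$: then $Z(g)$ is a Euclidean ball, so $(f^{(1)})^2=f^{(2)}$ everywhere by Lemma \ref{l-1-Newton}, and $g$ is trivially isotropic with $\operatorname{tr}(M_u^2)=\Ham(\Sn\cap u^\perp)^2/(n-1)$; substituting into the displayed identity immediately forces $b_n^2/A=(n-2)/(n-1)$.

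The main obstacle is the iterative integration producing the factor $1-\langle x_1,x_2\rangle^2$: one must keep careful track of how Gram determinants contract under integration against the uniform measure on $\Sn\cap u^\perp$, in particular the dimension bookkeeping (the perpendicular complement grows by one at each step, contributing a factor proportional to that dimension). Once this reduction is in hand, the rest of the argument is linear algebra on $M_u$ together with the constant-matching trick on $g\equiv 1$, and the ``almost every $u$'' qualifiers in (i) and (ii) pass through transparently since every manipulation is pointwise in $u$.
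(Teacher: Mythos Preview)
Your proposal is correct. Both you and the paper apply Weil's formulas (\ref{eq-Weil-2})--(\ref{eq-Weil-3}) and pin down the resulting dimensional constant by testing on the constant function (so that $Z(g)$ is a ball). The difference lies in how the determinant integral is processed: the paper assumes isotropy from the outset, so that after expanding $\det(x_1,\ldots,x_{n-1})^2$ the cross terms $\int\langle x,e_i\rangle\langle x,e_j\rangle g\,d\Ham$ with $i\neq j$ vanish and the expression collapses to $c_n\bigl(\int g\bigr)^2$; this yields (i)$\Rightarrow$(ii) directly, and the converse is declared ``similar'' and omitted. You instead integrate out the unweighted variables $x_3,\ldots,x_{n-1}$ first via the Gram-determinant reduction, obtaining the identity $f^{(2)}_{Z(g)}(u)=A\bigl[(\operatorname{tr} M_u)^2-\operatorname{tr}(M_u^2)\bigr]$ valid for \emph{every} bounded $g$, and only then invoke isotropy through the Cauchy--Schwarz equality case for the eigenvalues of $M_u$. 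This buys you a symmetric argument that proves both implications simultaneously, at the modest cost of carrying out the iterated integration explicitly.
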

\begin{proof}
Assume that $(i)$ holds. For any $u\in\Sn$, for which $g|_{\Sn\cap u^\perp}$, it holds (just expand the determinant and use the fact that $\int_{\Sn\cap u^\perp}\langle x,e_i\rangle\langle x,e_j\rangle d\Ham(x)=0$, for $i\neq j$)
\begin{eqnarray*}
&&\int_{\Sn\cap u^\perp}\dots\int_{\Sn\cap u^\perp}\textnormal{det}(x_1,\dots,x_{n-1})^2g(x_1)g(x_2)d\Ham(x_1)\dots d\Ham(x_{n-1})\\
&=&c_n\left(\int_{\Sn\cap u^\perp}g(x)d\Ham(x)\right)^2,
\end{eqnarray*}
where $c_n$ is a positive constant  that depends only on the dimension $n$. Combining with (\ref{eq-Weil-2}), (\ref{eq-Weil-3}) and the assumption, we arrive at
$$\left(f^{(1)}_{Z(g)}(u)\right)^2=d_nf^{(2)}_{Z(g)}(u),$$
for almost every $u\in U$, where $d_n>0$ again depends only on $n$. However, if $g\equiv a_n$ on $\Sn$, that is $Z(g)=B_2^n$, we already know that (\ref{eq-last-s1-s2}) holds, thus $d_n=1$. This proves $(ii)$. The proof that $(ii)$ implies $(i)$ is similar and we omit it.
\end{proof}
\textit{}\\
Proof of Theorem \ref{thm-counterexample}.\\
We know (see \cite[Theorem 3.5.4]{Sc}) that if $G:\Sn\to\R$ is an even smooth enough function, then there exists an even continuous function $w:\Sn\to\R$, such that
\begin{equation}\label{eq-w(x)}
G={\cal C}(w).
\end{equation}
Let $V$ be a spherical cap which is disoint from $U$ and $G$ be an even function of class $C^\infty$, such that $G|_U\equiv 1$ and $G|_V\equiv 2$ and let $w$ be the corresponding function in the integral representation (\ref{eq-w(x)}). Set $g:=w+1+\max_{u\in S_n}|w(u)|$. Then, $g>0$ on $\Sn$ and ${\cal C}(g)$ is the support function of the zonoid $Z(g)$, which is constant on the open sets $U$ and $V$. Hence, $\tau(Z(g),U)$ and $\tau(Z(g),V)$ are contained in spheres of radii 1 and 2 respectively. Thus, $f^{(1)}_{Z(g)}(u)=1$, for all $u\in U$ and $f^{(1)}_{Z(g)}(v)=2$, for all $v\in V$. On the other hand, $\left(f^{(1)}_{Z(g)}(u)\right)^2=f^{(2)}_{Z(g)}(u)$, for all $u\in U$. This, together with Lemma \ref{l-last-1}, shows that $g|_{\Sn\cap U^\perp}$ is isotropic for all $u\in U$. However, since $U^\perp\cap V^\perp\neq \emptyset$, (\ref{eq-Weil-3}) shows that $g$ cannot be constant on $U^\perp$ (or in $V^\perp$). $\square$\\
\\
Proof of Theorem \ref{thm-meta-main-1}.\\
Let us first extend $g$ to the whole $\Sn$, so that $f|_{\Sn\setminus U}\equiv 0$. Since for any two spherical caps $V_1,V_2\subseteq S^{n-1}$, it holds $V_1^\perp\cap V_2^\perp\neq \emptyset$, we may assume that $U$ is a spherical cap. Notice that if $g$ satisfies the assumptions of Theorem \ref{thm-meta-main-1}, then $g+c$ also satisfies the assumptions of Theorem \ref{thm-meta-main-1}, so since $g$ is bounded, we may assume $g$ to be non-negative. Denote, again, by $Z(g)$ the zonoid with generating measure $gd\Ha(\cdot)$. Lemma \ref{l-last-1} and the assumption show that 
$$\left(f^{(1)}_{Z(g)}\right)^2=f^{(2)}_{Z(g)} \ ,$$ almost everywhere in $U$. Since $S_1(Z(g),\cdot)$ is absolutely continuous, it follows by Theorem \ref{thm-main} that $\tau(Z(g),U)$ is contained in a sphere. In particular,
${\cal C}(g)|_U=h_{Z(g)}|_U=c+\langle a,\cdot\rangle$ and $b_n{\cal R}(g)=f^{(1)}_{Z(g)}=c'$, for some constants $c,c'>0$ and for some vector $a\in\Rn$. $\square$
\\
\\
Before ending this note, we would like to state, under some regularity assumptions on $g$, a local version of Theorem \ref{thm-meta-main-1}.  
\begin{theorem}\label{thm-meta-main-2}
Let $n\geq 4$ and $g:\Sn\to\R$ be a smooth enough function, so that the cosine transform of the measure $gd\Ha(\cdot)$ is of class $C^2$. Assume, furthermore, that there exist $k\geq 3$, $H\in G_{n,k}$ and an open set $U$ in $H$, such that $g|_{\Sn\cap u^\perp}$ is isotropic, for all $u\in U$. Then, $({\cal R}g)|_U$ is constant.
\begin{proof}
Again, we may assume that $g>0$. Then, $Z(g)$ is of class ${\cal C}^2_+$ (the same holds of course for $Z(g)|H$) and therefore it is meaningful to consider (\ref{eq:equal-princ-radii}) for $Z(g)$ pointwise. Let $u\in U$. As in Lemma \ref{l-last-1}, we see that (\ref{eq:equal-princ-radii}) holds for $Z(g)$ at $u$. Let $\{\varepsilon_1,\dots,\varepsilon_{k-1}\}$ be an orthonormal basis of $H\cap u^\perp$ and extend it to an orthonormal basis $\{\varepsilon_1,\dots,\varepsilon_{n-1}\}$ of $u^\perp$. It holds
$$Hess (h_Z(g))(u)_{(n-1)\times(n-1)}=r(u)I_{(n-1)\times(n-1)},$$
where the differentiation is with respect to the basis $\{\varepsilon_1,\dots,\varepsilon_{n-1}\}$ (or any orthonormal basis in $u^\perp$) and $r(u)>0$ is the common value of the principal radii of $\textnormal{bd}Z(g)$ at $u$.  
This shows that $Hess (h_{(Z|H)(g)})(u)_{(k-1)\times(k-1)}$ is also $r(u)$ times the $(k-1)\times(k-1)$ identity matrix, when the differentiation is with respect to the basis $\{\varepsilon_1,\dots,\varepsilon_{k-1}\}$. 
Consequently, for any $u\in U$, (\ref{eq:equal-princ-radii}) holds for $Z(g)|H$ at $u$.  Using Theorem \ref{thm-main}, we conclude that $\tau(Z(g)|H,U)$ is contained in a $k$-dimensional sphere, thus $r(u)$ is constant in $U$.
Finally, as in the proof of Lemma \ref{l-last-1}, one can easily see that 
$$r(u)=\frac{1}{n-1}\int_{\Sn\cap u^\perp}g(x)d\Ham(x),$$ 
which by Theorem \ref{thm-old} completes our proof.
\end{proof}
\end{theorem}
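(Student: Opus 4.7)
The plan is to reduce to Theorem \ref{thm-main} applied to the projected zonoid $Z(g)|H$, viewed as a convex body in $H\cong \mathbb{R}^k$. As in the proof of Theorem \ref{thm-meta-main-1}, I would first normalize by adding a large constant to $g$: this preserves the isotropy hypothesis (constants are trivially isotropic on each great subsphere) and allows me to assume $g>0$. Combined with the hypothesis that $h_{Z(g)}={\cal C}(gd\Ha)$ is of class $C^2$ and with the integral representations (\ref{eq-Weil-2})--(\ref{eq-Weil-3}), which yield positive continuous densities $f^{(j)}_{Z(g)}$, this forces $Z(g)$ to be of class ${\cal C}_+^2$, so that the principal radii $r^i_{Z(g)}(u)$ are well defined at every $u\in \Sn$.

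Fix $u\in U\subseteq \Sn\cap H$. Expanding the determinant in formula (\ref{eq-Weil-2}) exactly as in the proof of Lemma \ref{l-last-1}, but now pointwise since all densities are continuous, the isotropy of $g|_{\Sn\cap u^\perp}$ is equivalent to $\bigl(f^{(1)}_{Z(g)}(u)\bigr)^2=f^{(2)}_{Z(g)}(u)$. The equality case of Newton's inequality (cf.\ Lemma \ref{l-1-Newton}) then yields
\[
r^1_{Z(g)}(u)=\dots=r^{n-1}_{Z(g)}(u)=:r(u)>0,
\]
i.e.\ $Hess(h_{Z(g)})(u)=r(u)\,I_{(n-1)\times(n-1)}$ in any orthonormal basis of $u^\perp$.

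The key transfer step is the identity $h_{Z(g)|H}=h_{Z(g)}|_H$. Choosing an orthonormal basis $\{\varepsilon_1,\dots,\varepsilon_{k-1}\}$ of $H\cap u^\perp$ and extending it to an orthonormal basis of $u^\perp$, the $(k-1)\times(k-1)$ Hessian of $h_{Z(g)|H}$ at $u$ in the basis $\{\varepsilon_1,\dots,\varepsilon_{k-1}\}$ is the corresponding principal submatrix of $r(u)\,I_{(n-1)\times(n-1)}$, namely $r(u)\,I_{(k-1)\times(k-1)}$. Hence the body $Z(g)|H$ in the $k$-dimensional space $H$ satisfies (\ref{eq:equal-princ-radii}) at every $u\in U$; moreover $Z(g)|H$ is of class ${\cal C}_+^2$, so $S_1(Z(g)|H,\cdot)|_{\Bo(U)}$ is absolutely continuous. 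Since $k\geq 3$, Theorem \ref{thm-main} applies and yields that $\tau(Z(g)|H,U)$ is contained in a Euclidean sphere in $H$, and therefore $r(u)$ is constant on $U$.

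To finish, the common principal radius $r(u)$ equals $f^{(1)}_{Z(g)}(u)$ (the elementary symmetric function $s_1$ of $n-1$ equal entries equals that common value), and (\ref{eq-Weil-3}) identifies this quantity with $b_n\,{\cal R}(g)(u)$. Consequently ${\cal R}(g)|_U$ is constant, as required. The main obstacle I anticipate is not the computation itself but the careful bookkeeping in the transfer step: one must cleanly verify that the umbilical identity at $u$ for the ambient body $Z(g)$ descends to the $k$-dimensional projection $Z(g)|H$, and that the regularity and absolute continuity hypotheses of Theorem \ref{thm-main} are inherited by $Z(g)|H$ on $U$. Once these are in place, Theorem \ref{thm-main} closes the argument.
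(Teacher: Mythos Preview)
Your proposal is correct and follows essentially the same route as the paper: reduce to $g>0$, use the isotropy hypothesis via Lemma~\ref{l-last-1} and Newton's inequality to obtain $Hess(h_{Z(g)})(u)=r(u)I$, pass to the projection $Z(g)|H$ via $h_{Z(g)|H}=h_{Z(g)}|_H$ to get the same umbilical condition in dimension $k$, and then invoke Theorem~\ref{thm-main} on $Z(g)|H$ to conclude that $r(u)$, and hence ${\cal R}(g)(u)$, is constant on $U$. If anything, you are slightly more explicit than the paper in verifying the absolute-continuity hypothesis of Theorem~\ref{thm-main} for $Z(g)|H$ and in identifying $r(u)=f^{(1)}_{Z(g)}(u)=b_n{\cal R}(g)(u)$.
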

\textit{}\\
{\bf Acknowledgement.} We are grateful to Daniel Hug for his help and interest in this work and for discovering errors in the statement and proof of previous version of Theorem \ref{thm-main}. In particular, Example \ref{ex-1} is due to him. We would also like to thank Dmitry Ryabogin for some excellent discussions concerning problems related to Problem \ref{prob1} and Andreas Savas-Halilaj for providing us references \cite{Doc} and \cite{S-V} and for related discussions.

\vspace{0.8 cm}
 
\noindent Ioannins Purnaras \\
Department of Mathematics\\
University of Ioannina\\
Ioannina, Greece, 45110 \\
E-mail address: ipurnara@uoi.gr 

\vspace{0.5 cm}

\noindent Christos Saroglou \\
Department of Mathematics\\
University of Ioannina\\
Ioannina, Greece, 45110 \\
E-mail address: csaroglou@uoi.gr \ \&\ christos.saroglou@gmail.com

\end{document}